\documentclass[12pt]{amsart}

\usepackage[english]{babel}

\usepackage[a4paper]{geometry}
\usepackage{mathalfa}
\usepackage{tikz} 
\usepackage{caption}
\usepackage{subcaption}
\usepackage{amsmath,amssymb,amsfonts,amsthm,a4,fleqn,bbm,dsfont,enumerate}
\usepackage{enumitem,cite}
\newtheorem{definition}{Definition}[section] 
\newtheorem{lemma}[definition]{Lemma}
\newtheorem{example}[definition]{Example}
\newtheorem{theorem}[definition]{Theorem}
\newtheorem{proposition}[definition]{Proposition}
\newtheorem{corollary}[definition]{Corollary}
\newtheorem{remark}[definition]{Remark}

\usepackage{graphicx}
\graphicspath{C:\Users\akhil\OneDrive\Desktop\Library Mathematics\Other}
\usepackage[colorlinks=true, allcolors=blue]{hyperref}
\makeatletter
\newcommand{\ostar}{\mathbin{\mathpalette\make@circled*}}
\newcommand{\make@circled}[2]{
	\ooalign{$\m@th#1\smallbigcirc{#1}$\cr\hidewidth$\m@th#1#2$\hidewidth\cr}
}
\newcommand{\smallbigcirc}[1]{
	\vcenter{\hbox{\scalebox{0.77778}{$\m@th#1\bigcirc$}}}
}
\@namedef{subjclassname@2020}{
	\textup{2020} Mathematics Subject Classification}
\makeatother

\begin{document}
	\title{Furtherness in finite topological spaces}
	\author[A. Badra]{Akhilesh Badra}
	\address{AKHILESH BADRA; Department of Mathematics, University of Delhi,
		Delhi--110 007, India}
	\email{akhileshbadra028@gmail.com}
	\author[H. K. Singh]{Hemant kumar Singh}
	\address{HEMANT KUMAR SINGH; Department of Mathematics, University of Delhi,
		Delhi--110 007, India}
	\email{hemantksingh@maths.du.ac.in}
	\thanks{The first author is supported by research grant from the Council of Scientific and Industrial Research (CSIR), Ministry of Science and Technology, Government of India with reference number: 09/0045(13774)/2022-EMR-I}
	
	\subjclass[2020]{Primary 54F65; Secondary 54E99}
	\keywords{ Finite topological space, Asymmetric metric space, Combinatorics}

	\maketitle
	\begin{abstract}
		In this paper, we introduce a novel distance-like notion of furtherness for finite topological spaces, demonstrating that every finite space can be viewed as an asymmetric pseudometric space. In particular, we show that every finite $T_0$ space is asymmetric metric space. The topology induced by the forward balls coincides with the original topology of the space, while the backward balls induce the opposite topology. To capture essential information about each finite space, we construct a furtherness Matrix, which gives significant structural details of the finite space. As an application, we introduce the notion of center and radius of subsets of finite topological spaces.

	\end{abstract}

	\section{\textbf{Introduction}}
	
	In metric spaces, the concept of distance provides a fundamental framework for analyzing the relationships between points. However, this notion is not universally applicable in arbitrary topological spaces. A subclass of topological spaces, known as metrizable spaces, allows the definition of distance between points through the existence of a compatible metric. Among finite spaces, only discrete spaces are metrizable, while non-discrete finite spaces inherently lack this property.  
	
	Given the practical utility and simplicity of finite spaces, which find applications across various domains, we propose a distance-like concept, termed "furtherness," defined for all finite spaces. Although this notion of furtherness is  not symmetric, but satisfies the properties of an asymmetric semidistance.
	 We demonstrate that the topologies generated by forward and backward balls align with the original topology and its opposite, respectively. This provides a new lens to  interpret topological structures of finite spaces.\par 
	
	By the end of the $19^{th}$ century (Ref. \cite{Dedekind}), Richard Dedekind provided a characterization of modular lattices of finite length using what is now called a dimension function. The associated distance function reveals that every modular lattice of finite length can be viewed as an asymmetric metric space. In the begining of $20^{th}$ century (Ref. \cite{Birkhoff,Birkhoff2}), Garrett Birkhoff showed that the category of finite posets and the category of finite distributive lattices are essentially equivalent. Around the same time (Ref. \cite{Alexandroff}), Pavel Alexandroff established that finite $T_0$	spaces and finite posets are the same objects viewed from two different perspectives. Since every finite distributive lattice is a modular lattice of finite length, it follows that every finite $T_0$	space admits an associated asymmetric metric defined on it. Our notion of furtherness on finite spaces also verifies this result, particularly in the case of finite $T_0$	spaces. We show that every finite space can be regarded as an asymmetric pseudometric space. Using furtherness, one can prove that every finite $T_0$ space is an asymmetric metric space solely from the open sets of its topology, without any reliance to lattices or posets.\par
	
	Building upon the partial order structure inherent in finite spaces,  we establish a characterization of the preorder $x\leq y$ if and only if $x\in U_y$ (Ref. \cite{Stong}), 
	 within the framework of furtherness. This connection establishes a bridge between furtherness and fundamental concepts in algebraic topology,  offering new insights into the interplay between topological properties and distance-like notions in finite spaces.
	Central to this work is the construction of a furtherness matrix, which captures topological properties for each finite space. \par

		We introduce the concepts of center and radius for subsets of finite topological spaces, which is parallel to the notion in metric spaces (Ref. \cite{Badra}). 
    	This work not only deepens the understanding of finite spaces through furtherness and partial order relations but also opens new pathways for exploring algebraic and topological properties in discrete settings.\par

	Throughout the paper, a finite space $X$ means a finite topological space.

	\section{\textbf{Furtherness function}}	\label{s2.1}

	In this section, we introduce a distance like notion of furtherness between any two points of a finite topological space. Let $X$ be a finite topological space with topology $\mathcal{T}$.
	So, the number of open sets in $\mathcal{T}$ will be finite. Therefore, for every $x\in X$, we can always get atleast one finite sequence of open sets such that $x\in U_0\subseteq U_1\subseteq U_2 \subseteq...\subseteq U_j\subseteq  ... \subseteq  X$.\par
	
	Before moving on we first observe this in a few examples.
	
	\begin{example}\label{2.1}
		{\normalfont
			Let $X=\{1,2,3\}$ be a set and $\mathcal{T}=\{\emptyset,X,\{1,2\}\}$ be the topology on $X$.
			Here, $1\in \{1,2\}\subseteq\{1,2,3\}=X$,  $2\in \{1,2\}\subseteq\{1,2,3\}=X$ and $3 \in\{1,2,3\}=X$.
		}
	\end{example}
	
	\begin{example}\label{2.2}
		{\normalfont
			Let $X=\{a,b,c,d\}$ be a set and $\mathcal{T}=\{\emptyset,X,\{a\},\{d\},\{a,b\},\{a,d\},\\ 
			\{a,b,d\}\}$ be the topology on $X$. 
			Here, $a\in\{a\}\subseteq \{a,b\}\subseteq\{a,b,d\}\subseteq\{a,b,c,d\}=X$, $b\in \{a,b\}\subseteq\{a,b,d\}\subseteq\{a,b,c,d\}=X$, $c\in\{a,b,c,d\}=X$ and $d\in\{d\}\subseteq \{a,d\}\subseteq\{a,b,c,d\}=X$.\par 
			In above examples, we can notice a finite sequence of open sets around every point of the space. But this sequence may not be unique around each point. In Example \ref{2.2}, for $a\in X$, we have $a\in\{a\}\subseteq \{a,d\}\subseteq\{a,b,d\}\subseteq\{a,b,c,d\}=X$, is another sequence around $a$.
		}
	\end{example}
	
	Given a finite space $X$ and $x\in X$, we have the minimal open set $U_x$, as the intersection of all the open sets which contains $x$ (Ref.   \cite{Barmak}).
	
	\begin{definition}
		Let $X$ be a finite space. A nested sequence of open sets around $x\in X$ is a sequence $(U_j)_{j\geq 0}$ of open subsets $U_j$ of $X$ such that  $U_0=U_x$ and $U_j\subset U_{j+1},$ and there is no open set between $U_j$ and $U_{j+1}$ for all $j\geq 0$, where $U_j\subset U_{j+1}$ means $U_j$ is properly contained in $U_{j+1}$.
	\end{definition}
	
	Now, with the help of these nested sequences of open sets around a point $x\in X$, we define a notion of furtherness from $x$ to any point $y$ of $X$.
	
	\begin{definition}
		Let $X$ be a finite space and $x,y\in X$. The furtherness of $x$ from $y$ is the smallest whole number $k$ such that there exist a nested sequence $(U_j)_{j\geq0}$ of open sets around $x$ such that $y\in U_k$.
	\end{definition}

	In a finite space $X$, we always have finite number of nested sequences of open sets $(U_j)_{j\geq 0}$ around $x\in X$. We denote these sequences by $(U^i_j)_{j\geq 0}$, where $1\leq i\leq m, \text{ and } m\in \mathbb{N}$. For every $y\in X$ and a finite sequence $(U^i_j)_{j\geq 0}$, let $k_i$ be the least number such that $y\in U^i_{k_{i}}$. We define a map $\Psi_x: X\longrightarrow \{0,1,2,...,|X|-1\}\subseteq\mathbb{N}\cup \{0\}$, such that $\Psi_x(y)=\min\{k_i|1\leq i\leq m\}$.  Thus, $\Psi_{x}(y)$ is the furtherness of $x$ from $y$. It is clear that $\Psi_x(y)$ is uniquely determined for every $y\in X$. \par 
	
	This motivates us to introduce the furtherness function for every finite space $X$. 
	
	\begin{definition}[Furtherness function]
		Let $X$ be a finite space. The furtherness function on $X$ is a function $\Psi:X\times X\longrightarrow \{0,1,2,...,|X|-1\}\subseteq\mathbb{R}$, such that $\Psi(x,y)=\Psi_{x}(y),\forall x,y\in X$.
	\end{definition}

	Notice that $x\in U_0$, for every nested sequence $(U_j)_{j\geq 0}$ around $x$. So, the furtherness of every point from itself is zero. That is, $\Psi(x,x)=\Psi_{x}(x)=0,\forall x\in X.$\\
	
	Using the  partial order relation $\leq$ on a topology $\mathcal{T}$ of $X$ such that $U\leq V$ if and only if $U\subseteq V,$ for $U,V\in \mathcal{T},$ we can associate a poset daigram of $X$. For example




\begin{center}
	\begin{tikzpicture}  
		[scale=.9,auto=center,every node/.style={circle,fill=blue!10}] 
		
		\node (a1) at (5,4.5) [ball color=green, circle, draw=black, inner sep=0.1cm, label=left:X] {};  
		\node (a2) at (5,3)  [ball color=green, circle, draw=black, inner sep=0.1cm, label=left:{$\{1 ,2\}$}]{}; 
		\node (a3) at (5,1.5) [ball color=green, circle, draw=black, inner sep=0.1cm, label=below :$\emptyset$]{};  
		
		\draw (a1) -- (a2); 
		\draw (a2) -- (a3);  
		
		\tikzstyle{point1}=[ball color=cyan, circle, draw=black, inner sep=0.1cm]
		\tikzstyle{point2}=[ball color=green, circle, draw=black, inner sep=0.1cm]
		\tikzstyle{point3}=[ball color=red, circle, draw=black, inner sep=0.1cm]

		\node (v1) at (12,5) [ball color=cyan, circle, draw=black, inner sep=0.1cm, label=left:X] {};
		\node (v2) at (12,4) [point1, label=left:{$\{a,b,d\}$}] {};
		\node (v3) at (11,3) [point1,label=left:{$\{a,b\}$}] {};
		\node (v4) at (13,3) [point1,label=right:{$\{a,d\}$}] {};
		\node (v5) at (12,2) [point1,label=left:$\{a\}$] {};
		\node (v6) at (14,2) [point1,label=right: $\{d\}$] {};
		\node (v7) at (13,1) [point1, label=below:$\emptyset$] {};
		
		\draw (v1) -- (v2);
		\draw (v2) -- (v3) -- (v5);
		\draw (v2) -- (v4) -- (v5);
		\draw (v2) -- (v4) -- (v6);
		\draw (v5) -- (v7);
		\draw (v6) -- (v7);
		
		
	\end{tikzpicture}

\end{center}

\hspace{20mm}	Example \ref{2.1}\hspace{45mm} Example \ref{2.2}\\

In Example \ref{2.2}, furtherness of $a$ from $b$ is 1. As there are two nested sequences $a\in U^1_0=\{a\}\subset U^1_1= \{a,b\}\subset U^1_2=\{a,b,d\}\subset U^1_3=X$, and $a\in U^2_0=\{a\}\subset U^2_1= \{a,d\}\subset U^2_2=\{a,b,d\}\subset U^2_3=X$, of open sets around $a$. Here, $b\in  U^1_1$ and $b\in U^2_2$. So, $k_1=1$ and $k_2=2$. Thus, $\Psi(a,b)=\Psi_{a}(b)=\min\{k_1,k_2\}=1.$
Similarly, we have $\Psi(a,c)=3$ and $\Psi(a,d)=1.$\par 
In the same example, we can also notice that furtherness of $b$ from $a$ is 0. As there is only one nested sequence $b\in U_0=\{a,b\}\subset U_1=\{a,b,d\}\subset U_2=X$ around $b$, and $a\in U_0$. Thus, $\Psi(b,a)=\Psi_{b}(a)=0$. Similarly, the furtherness of $c$  from $a,b$ $\&$ $d$ is 0. It is cleat that the furthrerness function is not symmetric. This example also shows that $\Psi(x,y)=0$ does not imply $x=y$.


\begin{theorem}\label{2.6}
	
	Let $X$ be a finite $T_0$ space. Then $ \Psi(x,y)=\Psi(y,x)=0 \iff x=y,$ for $ x,y\in X$.
	
\end{theorem}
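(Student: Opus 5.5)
The plan is to first characterize when the furtherness vanishes, and then apply the $T_0$ axiom. The key observation is that $\Psi(x,y)=0$ holds if and only if $y\in U_x$. Indeed, every nested sequence $(U_j)_{j\geq 0}$ around $x$ starts with $U_0=U_x$ by definition. So the least index $k_i$ with $y\in U^i_{k_i}$ equals $0$ for some (equivalently, every) sequence exactly when $y\in U_x$. The only point that needs a remark is that at least one nested sequence around $x$ exists. This holds because $X$ is finite and open: we start at $U_x$ and repeatedly pass to an open set minimal among those properly containing the current one, which must terminate at $X$. Hence the minimum defining $\Psi_x(y)$ is taken over a nonempty set, and it equals $0$ iff $y\in U_x$.

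For the direction ($\Leftarrow$): if $x=y$, then $\Psi(x,x)=0$, as already noted in the paper just before the statement.

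For the direction ($\Rightarrow$): suppose $\Psi(x,y)=\Psi(y,x)=0$. By the observation above, $y\in U_x$ and $x\in U_y$. Since $U_x$ is open and contains $y$, and $U_y$ is the intersection of all open sets containing $y$, we get $U_y\subseteq U_x$. Symmetrically $U_x\subseteq U_y$, so $U_x=U_y$. Now suppose $x\neq y$. The $T_0$ axiom gives an open set $V$ containing exactly one of $x,y$, say $x\in V$ and $y\notin V$. But $x\in V$ implies $U_x\subseteq V$, and then $y\in U_x\subseteq V$, a contradiction. The other case is symmetric. Therefore $x=y$.

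There is no real obstacle here. The only step needing care is justifying that $\Psi(x,y)=0$ is equivalent to $y\in U_x$, that is, that nested sequences exist and all of them begin at $U_x$.
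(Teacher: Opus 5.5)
Your proposal is correct and follows essentially the same route as the paper: reduce $\Psi(x,y)=0$ to $y\in U_x$ and $\Psi(y,x)=0$ to $x\in U_y$, then contradict the $T_0$ axiom. You simply spell out the details the paper leaves implicit, namely that every nested sequence starts at $U_x$ and how $T_0$ is violated.
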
 

\begin{proof}
	Let $x\neq y$. Now, $ \Psi(x,y)=0 \implies y\in U_x$ and $\Psi(y,x)=0 \implies x\in U_y$.
	This contradicts the fact that $X$ is $T_0$. So, $x=y$. Conversely, if $x=y$ then $\Psi(x,y)=\Psi(y,x)=0$.
\end{proof}

\begin{example}\label{2.7}
	{\normalfont
		If $X$ is a finite indiscrete space then the furtherness of every point is 0 from any other point. And if $X$ is a finite discrete space then the furtherness of every point is 1 from any other point. Thus, the furtherness function is a metric on finite discrete spaces.
		
	}

	

\end{example}

If $\Psi(x,y)=0$, then $U_y\subseteq U_x$. This gives the following result.

\begin{theorem}\label{2}
	Let $X$ be a finite topological space $x\in X$. Then the smallest open set containing $x$ is $U_x=\{y\in X |\Psi(x,y)=0\}$. 
\end{theorem}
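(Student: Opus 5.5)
The plan is to prove the set equality $U_x=\{y\in X \mid \Psi(x,y)=0\}$ by double inclusion, working directly from the definition of $\Psi_x$ as a minimum over nested sequences around $x$. The one fact to secure first is that at least one nested sequence of open sets around $x$ exists, so that $\Psi_x(y)$ is well defined and the minimum is over a nonempty family. Start with $U_0=U_x$. If $U_j\neq X$, choose $U_{j+1}$ to be an open set minimal among the open sets properly containing $U_j$; such a set exists because $X$ itself qualifies and $\mathcal{T}$ is finite. By minimality no open set lies strictly between $U_j$ and $U_{j+1}$. Since the chain is strictly increasing in a finite set, it reaches $X$ after finitely many steps. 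Hence every point of $X$ lies in some $U_k$, and $\Psi_x(y)$ is a genuine whole number for every $y$.

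For the inclusion $U_x\subseteq\{y \mid \Psi(x,y)=0\}$, take $y\in U_x$ and any nested sequence $(U_j)_{j\geq 0}$ around $x$. By definition $U_0=U_x$, so $y\in U_0$ and the least index $k$ with $y\in U_k$ is $0$. Therefore $\Psi_x(y)=0$.

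For the reverse inclusion, suppose $\Psi(x,y)=0$. Then some nested sequence $(U^i_j)_{j\geq 0}$ around $x$ has $k_i=0$, that is, $y\in U^i_0$. Every nested sequence around $x$ starts with $U^i_0=U_x$, so $y\in U_x$. Equivalently, $U_y\subseteq U_x$, which is the remark preceding the statement. Combining the two inclusions gives the equality.

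There is no real obstacle here. The only point needing care is the existence of a nested sequence, and in particular the existence of an open cover $U_{j+1}$ of $U_j$ with nothing in between, which is exactly what makes the definition of $\Psi$ meaningful. Beyond that, the statement reduces to the observation that every nested sequence around $x$ begins with the minimal open set $U_x$.
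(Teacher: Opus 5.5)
Your proof is correct and follows the same route as the paper, which justifies the statement in one line: every nested sequence around $x$ starts with $U_0=U_x$, so $\Psi(x,y)=0$ exactly when $y\in U_x$ (equivalently $U_y\subseteq U_x$). Your explicit construction of a nested sequence, taking at each step a minimal open set properly containing the previous one, is a detail the paper leaves implicit; it is what guarantees that $\Psi_x$ is well defined.
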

By above theorem, we get that the singleton $\{a\}$ is open in a finite space $X$ $\iff$ $\Psi(a,b)\neq0,\forall~ b\in X\backslash\{a\}$.

\begin{remark}{\normalfont
		Given a finite topological space $X$ and $x,y\in X$, let $U_{x,y}$ be the smallest open set containing $\{x,y\}$. 
		It is easy to see that  $U_{x,y} = U_{x} \cup U_{y}$. 
		Similarly, for a subset $A$ of $X$, the smallest open set containing $A$ is $U_A= \bigcup\limits_{a\in A} U_a.$
	}
\end{remark}

Next, we observe that, if $\Psi(x,y)=k,$ for $x,y\in X$, then how open sets before $k^{th}$ term in nested sequence $(U_j)_{j\geq 0}$ around $x$ are related, and prove that its $k^{th}$ term is always  the same, whenever $y\in U_k$.

\begin{lemma}\label{2.8.}
	Let $X$ be a finite space and $(U_j)_{j\geq0}$ be a nested sequence of open sets around $x\in X$, then $U_j= U_{j-1}\cup U_a$, for some $a\in X\backslash U_{j-1}$.
\end{lemma}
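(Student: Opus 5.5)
Fix $j\geq 1$. Since the sequence is nested, $U_{j-1}\subset U_j$ properly, so I can pick a point $a\in U_j\setminus U_{j-1}$; this $a$ lies in $X\setminus U_{j-1}$, as the statement requires. The plan is to show that $U_{j-1}\cup U_a$ is an open set sitting between $U_{j-1}$ and $U_j$, and then use the ``no open set in between'' clause of the definition of a nested sequence to force equality with $U_j$.

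First, $U_{j-1}\cup U_a$ is open, since $U_a$ is the minimal open set of $a$ and $U_{j-1}$ is open. Second, $U_a\subseteq U_j$: $U_j$ is an open set containing $a$, and $U_a$ is the intersection of all such sets. Together with $U_{j-1}\subseteq U_j$ this gives
\[
U_{j-1}\subset U_{j-1}\cup U_a\subseteq U_j .
\]
The first inclusion is proper because $a\in U_a$ but $a\notin U_{j-1}$.

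By the definition of a nested sequence of open sets around $x$, there is no open set strictly between $U_{j-1}$ and $U_j$. Since $U_{j-1}\cup U_a$ is open and properly contains $U_{j-1}$, it cannot be properly contained in $U_j$, so $U_{j-1}\cup U_a=U_j$. I read ``between'' in the definition as ``strictly between'', meaning an open $V$ with $U_{j-1}\subset V\subset U_j$; that reading is what makes this step go through. In fact the argument shows more: the equality holds for every $a\in U_j\setminus U_{j-1}$, not just for one.

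The argument is short, so there is no real obstacle. The only points that need care are the minimality of $U_a$, used to get $U_a\subseteq U_j$, and the observation that a union of open sets is open. For $j=0$ the statement is not meant to apply, since $U_0=U_x$ by definition, so the claim concerns $j\geq1$.
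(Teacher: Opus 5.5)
Your proof is correct and matches the paper's argument: you pick $a\in U_j\setminus U_{j-1}$, observe $U_a\subseteq U_j$ to get $U_{j-1}\subset U_{j-1}\cup U_a\subseteq U_j$, and use the absence of an open set strictly between $U_{j-1}$ and $U_j$ to force equality. Your extra remark, that the equality holds for \emph{every} $a\in U_j\setminus U_{j-1}$, is also correct and slightly strengthens the statement.
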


\begin{proof}
	As $U_{j-1}\subset U_{j}$, let $a\in U_j\backslash U_{j-1}$. So, we get $U_a\subseteq U_j$, and $U_{j-1}\subset U_{j-1}\cup U_a \subseteq U_j$. By the definition of nested sequence of open sets, we get $U_{j}=(U_{j-1}\cup U_a)$.
\end{proof}

It is easy to observe that, for an open subset $A$ of a finite  space $X$ such that $U_x\subseteq A$ for some $x\in X$, there exists a nested sequence $(U_j)_{j\geq 0}$ of open subsets around $x$ such that $U_k= A$ for $k\in \mathbb{N}\cup \{0\}$.

\begin{theorem}\label{2.9.}
	Let $X$ be a finite space and $x,y\in X$ such that $\Psi(x,y)=k$. Then there exists a nested sequence $(U_j)_{j\geq 0}$ of open sets around $x$ such that $U_k=U_{x,y}$.
\end{theorem}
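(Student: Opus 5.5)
The plan is to start from a nested sequence $(U_j)_{j\geq 0}$ around $x$ that realizes the minimum in the definition of $\Psi(x,y)=k$, so $y\in U_k$. I will intersect its first $k+1$ terms with $V:=U_{x,y}=U_x\cup U_y$, which gives a chain of open sets from $U_x$ up to $V$. After removing repetitions, I will extend this chain to a full nested sequence and compare its length with $k$, using the minimality of $k$.

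\emph{Step 1.} Since $U_k$ is open and contains $x$ and $y$, it contains $U_x\cup U_y=V$ by the Remark above. Set $W_j:=U_j\cap V$ for $0\le j\le k$. Then $W_0=U_x\cap V=U_x$, because $U_0=U_x\subseteq V$. Also $W_k=V$, and $W_0\subseteq W_1\subseteq\dots\subseteq W_k$ are open.

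\emph{Step 2 (the key step).} I will show that each step $W_{j-1}\subseteq W_j$ is either an equality or has no open set strictly between its ends. Suppose $W$ is open with $W_{j-1}\subset W\subset W_j$. The set $U_{j-1}\cup W$ is open and lies between $U_{j-1}$ and $U_j$. Since there is no open set strictly between $U_{j-1}$ and $U_j$, it equals one of them.
\begin{itemize}
\item If $U_{j-1}\cup W=U_{j-1}$, then $W\subseteq U_{j-1}\cap V=W_{j-1}$, which is a contradiction.
\item If $U_{j-1}\cup W=U_j$, then intersecting with $V$ and using $W\subseteq V$ gives $W_j=(U_{j-1}\cap V)\cup W=W$, which is again a contradiction.
\end{itemize}
This lattice-type argument, using only unions and intersections of open sets, is the main point. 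It replaces any appeal to the Jordan--Dedekind chain condition.

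\emph{Step 3.} Delete the repeated terms from $W_0,\dots,W_k$. This gives a chain $U_x=V_0\subset V_1\subset\dots\subset V_m=V$ with $m\le k$, in which consecutive terms have no open set strictly between them. Extend it above $V$ by repeatedly applying Lemma \ref{2.8.}: adjoin some $U_a$ with $a\notin V_i$ and pass to a minimal open enlargement, until $X$ is reached. This produces a nested sequence $(V_j)_{j\ge0}$ around $x$ with $V_m=U_{x,y}$.

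\emph{Step 4.} Since $y\in V_m$, the definition of $\Psi$ gives $m\ge \Psi(x,y)=k$. Together with $m\le k$ this forces $m=k$, so $V_k=U_{x,y}$ as claimed.

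For Step 3, one must check that a minimal open enlargement always exists. This holds because $X$ is finite: among the open sets strictly containing $V_i$, choose one of minimal cardinality.
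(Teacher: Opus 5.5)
Your proof is correct, but it takes a different route from the paper's.

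The paper works with points and minimal open sets:
\begin{itemize}
\item It first shows that the minimizing sequence satisfies $V_k=V_{k-1}\cup U_y$.
\item It then takes some nested sequence $(W_j)_{j\geq 0}$ around $x$ with $W_p=U_{x,y}$, so $p\geq k$ by minimality, and writes $W_p=U_x\cup\bigcup_{j=1}^p U_{b_j}$ with $b_j\notin W_{j-1}$.
\item All $b_j$ lie in $U_y\subseteq V_k$ and outside $U_x$. A pigeonhole count then gives $p\leq k$: each layer $V_l\setminus V_{l-1}$ contains at most one $b_j$, since two would give two distinct open sets strictly between $V_{l-1}$ and $V_l$.
\end{itemize}

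You instead push the minimizing chain into the interval between $U_x$ and $U_{x,y}$ by intersecting with $U_{x,y}$. Your Step 2 is exactly the fact that $U\mapsto U\cap U_{x,y}$ turns each covering step into either an equality or a covering step. Your Case 2 computation $(U_{j-1}\cup W)\cap V=(U_{j-1}\cap V)\cup W$ is just the modular law.

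What your approach buys:
\begin{itemize}
\item It is more general: the argument works verbatim in any finite modular (in particular distributive) lattice.
\item It needs no choice of generators $b_j$.
\item It gives slightly more. Since $m=k$ is forced, the chain $(U_j\cap U_{x,y})_{0\leq j\leq k}$ has no repetitions, so the required sequence comes directly from any minimizing sequence.
\item It avoids a step the paper leaves tacit. The paper's strict inclusion $V_{l-1}\cup U_{b_j}\subset V_{l-1}\cup U_{b_j}\cup U_{b_{j'}}$ needs $b_{j'}\notin U_{b_j}$, which holds only once one orders $j<j'$.
\end{itemize}

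The paper's version, in turn, stays in the elementary point-set language used throughout. It also produces the identity $V_k=V_{k-1}\cup U_y$, which is reused in the proof of the next corollary.
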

\begin{proof} 
	As  $\Psi(x,y)=k$, there exists a nested sequence $(V_j)_{j\geq 0}$ of open sets around $x$ such that $y\in V_k$. If $k=0$, then it is trivialy true because $y\in V_0=U_x\implies U_y\subseteq U_x=U_{x,y}=V_0$. Now let $k>0$. By Lemma \ref{2.8.}, we have $V_0= U_x$, $V_1= V_0\cup U_{a_1},$ for some $a_1\notin V_0$, $V_2= V_0\cup U_{a_1}\cup U_{a_2},$ for some  $a_2 \notin V_1$. By induction, we get $V_{k-1}= V_0\cup (\bigcup\limits_{i=1}^{k-1} U_{a_i})$, and $V_k= V_{k-1}\cup U_{a_k}$ for some $a_k\notin V_{k-1}$. As $y\notin V_{k-1}\implies y\in U_{a_k}$. So, $U_y\subseteq U_{a_k} \implies V_{k-1}\subset V_{k-1}\cup U_y\subseteq V_{k-1}\cup  U_{a_k} =V_k.$ 
	So, $V_k= V_{k-1}\cup U_y$. \par 
	As, $U_x$ is contained in open set $U_{x,y},$ there exists a nested sequence $(W_j)_{j\geq 0}$ around $x$ such that $W_p=U_{x,y}$ for some $p\in \mathbb{N}\cup\{0\}$. Now, $y\in W_p\implies p\geq k.$ Again by lemma \ref{2.8.}, we have $W_0= U_x$, and
	$W_p= U_x\cup (\bigcup\limits_{j=1}^p U_{b_j})$, for $b_j \notin W_{j-1}$.

	
	
	Now, $b_j\in  U_x \cup (\bigcup\limits_{j=1}^p U_{b_j})= U_x\cup U_y$ and $b_j\notin U_x \implies b_j\in U_y,  1\leq j\leq p$.
	So, $b_j\in V_k, 1\leq j\leq p.$ Next, we observe that each  $V_l$ contains atmost one $b_j$ which is not in $V_{l-1}, 1\leq l\leq k$. 
	If $b_j, b_{j'}\in V_l $ for some $1\leq l\leq k$ such that $b_j, b_{j'} \notin V_{l-1}$ and $b_j\neq b_{j'}$. Then $ V_{l-1}\subset  V_{l-1}\cup U_{b_j}\subset V_{l-1}\cup U_{b_j} \cup  U_{b_{j'}} \subseteq V_{l}$, which contradicts that there is no open set between $V_{l-1}$ and $V_l$. So, $V_k$ can have atmost $k$ distinct $b_j$ for $1\leq j\leq p$. But $b_j\in V_k,\forall~ 1\leq j\leq p.$ So, $p\leq k$.
	Thus, $p=k$. So, there exist a nested sequence $(U_j)_{j\geq 0}$ of open sets around $x$ such that $U_k=U_{x,y}$.
\end{proof}

Above result gives us an alternate definition of furtherness of $x\in X$ from any other point $y$ of $X$. That is $\Psi_x(y)$ is the least whole number $k$ such that there exist a nested sequence $(U_j)_{j\geq 0}$ of open subsets around $x$ such that $U_k=U_{x,y}$.\par 
If $x,y\in X$ such that $\Psi(x,y)=k$. Then, Theorem \ref{2.9.}, gives us the existence of a nested sequence $(U_j)_{j\geq1}$ around $x$ such that $U_k=U_{x,y}$. Next, we observe that this $U_k$ is unique in every nested sequence $(U_j)_{j\geq1}$ around $x$ such that $y\in U_k$.

\begin{corollary}
	Let $X$ be a finite space and $x,y\in X$ such that $\Psi(x,y)=k$. If $y\in U_k$ for a nested sequence of open sets $(U_j)_{j\geq 0}$ around $x$, then $U_k=U_{x,y}$.
\end{corollary}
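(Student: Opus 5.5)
We must show: if $\Psi(x,y)=k$ and $(U_j)_{j\geq 0}$ is any nested sequence around $x$ with $y\in U_k$, then $U_k=U_{x,y}$. The plan is to show the two inclusions separately. The inclusion $U_{x,y}\subseteq U_k$ is immediate. Indeed, $U_x=U_0\subseteq U_k$ and $y\in U_k$ with $U_k$ open, so $U_y\subseteq U_k$, and hence $U_{x,y}=U_x\cup U_y\subseteq U_k$.

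For the reverse inclusion we first pin down $U_k$ relative to $U_{k-1}$ (the case $k=0$ is trivial, since then $U_0=U_x\supseteq U_y$ gives $U_0=U_{x,y}$). Since $\Psi(x,y)=k$ is a minimum over all nested sequences, $y\notin U_{k-1}$; otherwise this very sequence would witness $\Psi(x,y)\leq k-1$. So $U_{k-1}\subset U_{k-1}\cup U_y\subseteq U_k$, and because there is no open set strictly between $U_{k-1}$ and $U_k$ we get $U_k=U_{k-1}\cup U_y$. This is exactly the argument already used in the proof of Theorem \ref{2.9.}.

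It then remains to show $U_{k-1}\subseteq U_x\cup U_y$. Suppose not. Then $W:=U_{k-1}\cap(U_x\cup U_y)$ is an open set with $U_x\subseteq W\subset U_{k-1}$. Choose a nested sequence $(W_j)_{j\geq 0}$ around $x$ through $W$. Refining chains by counting, any maximal chain of open sets from $U_x$ to $W$ has length at most $k-1$ minus one. More concretely, $W$ together with the chain $U_0\subset\cdots\subset U_{k-1}$ should give a shorter route to $U_{x,y}$: passing from $U_x$ to $U_{x,y}$ inside the interval below $U_k$ avoiding a point $z\in U_{k-1}\setminus(U_x\cup U_y)$.

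The main obstacle is making this counting rigorous, since lengths of maximal chains between two open sets need not be equal in a general finite lattice of open sets. The plan is to exploit the distributivity of the topology, which is closed under unions and intersections. The map $V\mapsto V\cap (U_x\cup U_y)$ sends the chain $U_0\subseteq\cdots\subseteq U_k$ to a chain from $U_x$ to $U_{x,y}$. Because $(V\cup U_a)\cap S=(V\cap S)\cup(U_a\cap S)$, each step either stays equal or is a covering step; I expect to verify covering via Lemma \ref{2.8.} and distributivity. The index $k-1\mapsto k$ is not a repeat, since $y$ enters there. If some step repeats, which happens precisely when some $U_j\not\subseteq U_{x,y}$, this produces, after deleting repeats, a nested sequence around $x$ reaching $U_{x,y}$ in fewer than $k$ steps. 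That contradicts $\Psi(x,y)=k$. Hence no step repeats, every $U_j\subseteq U_{x,y}$, and $U_k=U_{x,y}$.
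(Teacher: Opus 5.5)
Your intersection argument is correct, and it takes a different route from the paper's. The paper takes the sequence with $k$-th term $U_{x,y}$ supplied by Theorem \ref{2.9.} and writes $V_k=U_x\cup\bigcup_{i\le k}U_{a_i}$ via Lemma \ref{2.8.}. It shows $U_{a_k}=U_y$, and then proves $a_l\in U_y$ for $l<k$ by contradiction, shortening the chain through the sets $V_{j+1}\setminus U_{a_l}$.

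You need neither Theorem \ref{2.9.} nor the generators $a_i$. Intersecting every term with $S=U_{x,y}$ keeps all terms open and leaves $U_0=U_x$ unchanged. By contrast, $V_{j+1}\setminus U_{a_l}$ need not be open, and it drops points of $U_x$ whenever $U_{a_l}\cap U_x\neq\emptyset$. So the paper's shortening step needs a justification that your construction provides automatically. In effect you are using the lattice fact that, in a distributive lattice, meeting with a fixed element sends each cover to a cover or an equality. The paper's route stays with the point-by-point generator bookkeeping it uses elsewhere in Section 2, and that bookkeeping is exactly where its argument is weakest.

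To turn the proposal into a proof, two changes are needed.

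First, discard the paragraph about $W=U_{k-1}\cap(U_x\cup U_y)$. Its claim on lengths of maximal chains presupposes a Jordan--Dedekind property that is not available, and your last paragraph does not use it.

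Second, actually prove the ``cover or repeat'' claim, which you only say you expect to verify. Distributivity alone suffices, without Lemma \ref{2.8.}. Suppose $V\subset W$ are open with no open set strictly between them, and $T$ is open with $V\cap S\subseteq T\subseteq W\cap S$. Then $V\subseteq V\cup T\subseteq W$, so either $V\cup T=V$, giving $T=V\cap S$, or $V\cup T=W$, giving $W\cap S=(V\cap S)\cup T=T$.

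The direction of your ``precisely when'' that you actually use is a one-line induction. If $U_j\subseteq S$ and step $j\to j+1$ is not a repeat, then $U_j\subset U_{j+1}\cap S\subseteq U_{j+1}$ forces $U_{j+1}\subseteq S$.

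After deleting repeats, continue above $U_{x,y}$ by covering steps to obtain an honest nested sequence around $x$. Once no step repeats, the induction already gives $U_k\subseteq U_{x,y}$, so your preliminary reduction to $U_k=U_{k-1}\cup U_y$ is not needed.
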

\begin{proof}
	Let $(V_j)_{j\geq 0}$ be a nested sequence of open sets around $x$ such that $y\in V_k$. It is clear that $V_{k}= V_0\cup (\bigcup\limits_{i=1}^{k} U_{a_i})$, for some $a_i\notin V_{i-1}$. If $k=0$, then it is trivialy true because $y\in U_x=U_{x,y}=V_0$. Now, let $k>0$. By Theorem \ref{2.9.}, we have a nested sequence $(W_j)_{j\geq 0}$ around $x$ such that $W_k=U_{x,y}$, where  $W_k= U_x\cup (\bigcup\limits_{i=1}^k U_{b_i})=U_x\cup U_y$, for $b_i \notin W_{i-1}$. As $b_j\notin U_x$, $U_{b_j}\subseteq U_y,  1\leq j\leq p$. Therefore, $\bigcup\limits_{i=1}^p U_{b_i}\subseteq U_y$. As $k>0$ and $y\notin U_x$ and $y\in U_x\cup U_y =  U_x\cup (\bigcup\limits_{i=1}^p U_{b_i})$. So, $y\in \bigcup\limits_{i=1}^p U_{b_i}$. Therefore, $U_y\subseteq\bigcup\limits_{j=1}^p U_{b_j}$. So, $U_y=\bigcup\limits_{j=1}^p U_{b_j}$. Next, we show that $V_k=W_k=U_{x,y}$.\par 
	First, we observe that $\bigcup\limits_{i=1}^{k} U_{a_i}= \bigcup\limits_{i=1}^k U_{b_i}.$ We have  $\bigcup\limits_{i=1}^k U_{b_i}=U_y\subseteq U_{a_k}\subseteq \bigcup\limits_{i=1}^{k} U_{a_i}.$  As $a_k\in V_k=V_{k-1}\cup U_y$ and $a_k\notin V_{k-1}$,  we get $a_k\in U_y\implies U_{a_k}\subseteq U_y = \bigcup\limits_{i=1}^k U_{b_i}.$ Now, let there exist  $a_l$ such that $a_l\notin U_y$ for $1\leq l\leq (k-1).$ As $\Psi(x,y)=k, y\notin V_{k-1} \implies y\notin U_{a_i},$ for $1\leq i\leq (k-1)$. So, $y\notin U_{a_l}$. Consider, a nested sequence $(U_j)_{j\geq 0}$ of open sets around $x$, where $U_j =V_j$, for $j< l$ and $U_j= V_{j+1}\backslash U_{a_l}$ for $ j\geq l$. Here, $ y\in U_{k-1}= V_k\backslash U_{a_l}$. Thus $\Psi(x,y)<k$, a contradiction. Therefore, $a_l\in U_y, \forall~ 1\leq l\leq k$. So, $\bigcup\limits_{i=1}^{k} U_{a_i}\subseteq U_y= \bigcup\limits_{i=1}^k U_{b_i}.$ 
	Hence, $\bigcup\limits_{i=1}^{k} U_{a_i} = \bigcup\limits_{i=1}^{k} U_{b_i}\implies U_x\cup (\bigcup\limits_{i=1}^{k} U_{a_i} )= U_x\cup(\bigcup\limits_{i=1}^{k} U_{b_i}) \implies V_k=W_k=U_{x,y}.$
\end{proof}

In the next theorem, we show that the furtherness function of a finite  space also satisfies the triangle inequality like distance function in metric spaces.

\begin{theorem}\label{2.11.}
	Furtherness function on a finite space satisfies the triangle inequality.
\end{theorem}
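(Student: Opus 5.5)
The plan is to prove that for all $x,y,z\in X$,
\[
\Psi(x,z)\leq \Psi(x,y)+\Psi(y,z).
\]
Let $\Psi(x,y)=k$ and $\Psi(y,z)=l$. By Theorem \ref{2.9.} there is a nested sequence $(U_j)_{j\geq 0}$ around $x$ with $U_k=U_{x,y}=U_x\cup U_y$. There is also a nested sequence $(V_j)_{j\geq 0}$ around $y$ with $V_0=U_y$ and $V_l=U_{y,z}=U_y\cup U_z$. I will splice these into a single chain around $x$ that reaches $z$ within $k+l$ steps.

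\textbf{The spliced chain.} Define $W_j=U_j$ for $0\leq j\leq k$, and $W_{k+j}=U_k\cup V_j$ for $0\leq j\leq l$. The two definitions agree at $j=k$, since $U_k\cup V_0=U_k\cup U_y=U_k$. Each $W_i$ is open, the sequence is increasing, and
\[
W_{k+l}=U_x\cup U_y\cup U_z\ni z.
\]

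\textbf{Key lemma (transfer of covers).} If $A\subset B$ are open sets with no open set strictly between them, and $C$ is any open set, then either $C\cup A=C\cup B$, or there is no open set strictly between $C\cup A$ and $C\cup B$.

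To prove it, suppose $O$ is open with $C\cup A\subset O\subset C\cup B$.
\begin{itemize}
\item The set $O\cap B$ is open and satisfies $A\subseteq O\cap B\subseteq B$. By the cover assumption, $O\cap B$ equals $A$ or $B$.
\item If $O\cap B=B$, then $O\supseteq C\cup B$, which contradicts $O\subset C\cup B$.
\item If $O\cap B=A$, then
\[
O=O\cap(C\cup B)=(O\cap C)\cup(O\cap B)=C\cup A,
\]
since $C\subseteq O$. This contradicts $C\cup A\subset O$.
\end{itemize}
Both cases are impossible, so no such $O$ exists. This step, showing the tail of the spliced chain still consists of covers, is the only real obstacle. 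It is essentially the distributivity of the lattice of open sets, and the computation above handles it using only the fact that intersections and unions of open sets are open.

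\textbf{Conclusion.} Apply the lemma with $C=U_k$, $A=V_{j-1}$ and $B=V_j$ for $1\leq j\leq l$. Each step $W_{k+j-1}\subseteq W_{k+j}$ is therefore either an equality or a cover. The steps $W_{j-1}\subset W_j$ for $j\leq k$ are covers by hypothesis. Delete the repeated terms to obtain a strictly increasing chain $W'_0=U_x\subset W'_1\subset\cdots\subset W'_m=W_{k+l}$ with $m\leq k+l$, in which consecutive terms have no open set between them. Extend it by covers up to $X$, which is possible because $X$ is finite. The result is a nested sequence of open sets around $x$ whose $m$-th term contains $z$. By the definition of furtherness,
\[
\Psi(x,z)\leq m\leq k+l=\Psi(x,y)+\Psi(y,z).
\]
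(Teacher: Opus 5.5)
Your proof is correct and is essentially the paper's argument: the paper also splices a nested sequence around $x$ that reaches the minimal open set containing $x$ and the intermediate point with $U_x$ united with a nested sequence around that intermediate point (phrased there as a contradiction, with the intermediate point called $z$). The one real difference is that the paper merely asserts the spliced sequence is nested, whereas your transfer-of-covers lemma and your deletion of repeated terms actually justify this; the deletion is genuinely needed, since consecutive spliced terms can coincide.
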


\begin{proof} Let $X$ be a finite space. We show that $\Psi(x,y)\leq \Psi(x,z)+\Psi(z,y),\forall x,y,z \in X.$ If $\Psi(x,y)=0$, then there is nothing to prove. Let $\Psi(x,y)=p>0$ and $\Psi(x,z)=q$. If $p\leq q$, then we  are done.\\ 
	Now, let $p>q$. As 
	$\Psi(x,z)=q$, by Theorem \ref{2.9.}, there exists a nested sequence $(V_j)_{j\geq0}$ of open sets around $x$ such that $V_q=U_{x,z}$. 
	Now, let $\Psi(z,y)< p-q$. Then there exists a nested sequence $(W_j)_{j\geq 0}$ of open sets around $z$ such that $y\in W_k$ for some $k<p-q$. \par  
	Consider, a sequence $(U_j)_{j\geq 0}$ such that $U_j =V_j$, for $0\leq j\leq q$ and $U_j= U_x \cup W_{j-q}$ for $ j\geq q$. Clearly, $U_q
	=V_q$. As $(V_j)_{j\geq 0}$ and $(W_j)_{j\geq 0}$ are nested sequences, we get $(U_j)_{j\geq 0}$ ia a nested sequence around $x$. 
	We have $U_{q+k}=  U_x\cup W_k \implies y\in U_{q+k}.$ So, $\Psi(x,y)\leq q+k <q+p-q =p$, a conradiction. Therefore, $\Psi(z,y)\geq p-q \implies p\leq q+ \Psi(z,y)$. Thus, $\Psi(x,y)\leq \Psi(x,z)+\Psi(z,y)$.
\end{proof}

\begin{remark}\label{2.14.}
	{\normalfont
		By Theorem \ref{2.6} and Theorem \ref{2.11.}, we deduce that the furtherness function $\Psi$ defined on a finite $T_0$  space $X$ is an asymmetric distance on it. Thus,  $(X,\Psi)$ is an asymmetric metric space (Ref. \cite{Mennucci}). Moreover, the furtherness function on every finite space is a asymmetric semidistance (Ref. \cite{Mennucci}). 
	}
\end{remark}

In the following remarks, we make some observations about minimal open sets of finite $T_0$ spaces and the open sets in the nested sequence around any point of a finite $T_0$ spaces.

\begin{remark}\label{2.13.}{\normalfont
	If $X$ is finite $T_0$ space and $x\neq y \in X$, then $U_x\neq U_y$. By Theorem \ref{2.6},  if $x\neq y$, then either $\Psi(x,y)\neq 0$ or $\Psi(y,x)\neq 0$. That means either $x\notin U_y$ or $y\notin U_x$. Therefore, $U_x\neq U_y$.
	 }
\end{remark}

	\begin{remark}\label{2.13..}{\normalfont
     If $X$ is finite $T_0$ space , then for any nested sequence $(U_j)_{j\geq 0}$ of open sets around $x\in X$, we observe that $U_j$ conatins just one more point than $U_{j-1}$. By Lemma \ref{2.8.}, $U_j= U_{j-1}\cup U_a$, for some $a\in X\backslash U_{j-1}$. Let $p,q\in U_j$ such that $p,q\notin U_{j-1}$, then $p,q\in U_a$ and $p,q\notin U_{j-1}$. By Theorem \ref{2}, we get $\Psi(a,p)=\Psi(a,q)=0$, which implies $U_p\subset U_a$ and $U_q\subset U_a$. As $X$ is $T_0$, $U_p\neq U_q$ $\implies U_{j-1}\subset (U_{j-1}\cup U_{p}) \subset (U_{j-1}\cup U_{p} \cup U_q) \subseteq (U_{j-1}\cup U_a) = U_j,$ a contradiction to the nestedness of $(U_j)_{j\geq0}$. So, $U_j$ conatins just one more point than $U_{j-1}$.
}
\end{remark}

 The following result gives a characterization of furtherness for finite $T_0$ spaces in terms of cardinality of minimal open sets.
 
 \begin{corollary}\label{2.14}
 	Let $X$ be a finite $T_0$ space, then $\Psi(x,y)=|U_y\backslash U_x|, \forall x,y\in X.$
 \end{corollary}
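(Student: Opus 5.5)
The plan is to combine Theorem \ref{2.9.} with Remark \ref{2.13..}. By Theorem \ref{2.9.}, there is a nested sequence $(U_j)_{j\geq 0}$ around $x$ with $U_k=U_{x,y}=U_x\cup U_y$, where $k=\Psi(x,y)$. Since $X$ is $T_0$, Remark \ref{2.13..} says each step of a nested sequence adds exactly one point, so $|U_j|=|U_x|+j$ for all $j$. Evaluating at $j=k$ gives
\[
k=|U_{x,y}|-|U_x|=|(U_x\cup U_y)\backslash U_x|=|U_y\backslash U_x|.
\]
This gives the equality directly, without proving two separate inequalities.

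For robustness I will also check the inequality $\Psi(x,y)\geq |U_y\backslash U_x|$ independently. Take any nested sequence $(V_j)_{j\geq0}$ around $x$ with $y\in V_k$. Since $V_k$ is open, $U_y\subseteq V_k$, so $U_x\cup U_y\subseteq V_k$. Hence $|U_x|+|U_y\backslash U_x|\leq |V_k|=|U_x|+k$, again using Remark \ref{2.13..}. Minimizing over all such sequences gives $|U_y\backslash U_x|\leq\Psi(x,y)$.

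The reverse inequality follows from the construction behind Theorem \ref{2.9.}. Because $U_x\subseteq U_{x,y}$ and $U_{x,y}$ is open, there is a nested sequence around $x$ that passes through $U_{x,y}$, as observed just before Theorem \ref{2.9.}. By Remark \ref{2.13..}, it reaches $U_{x,y}$ at index exactly $|U_{x,y}|-|U_x|$, and $y$ lies in that term. So $\Psi(x,y)\leq |U_y\backslash U_x|$.

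The only delicate point is the claim that every maximal chain step adds exactly one point. This is precisely Remark \ref{2.13..}, which relies on the $T_0$ hypothesis; without $T_0$, indiscrete blocks break the count. Once that remark is accepted, everything else is bookkeeping. The case $y\in U_x$ gives $0$ on both sides, consistent with Theorem \ref{2}.
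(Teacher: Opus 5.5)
Your main argument is correct and is exactly the paper's proof: Theorem~\ref{2.9.} gives a nested sequence around $x$ with $U_k=U_{x,y}$, and Remark~\ref{2.13..} gives $|U_k|=|U_x|+k$, so $k=|U_y\backslash U_x|$. Your supplementary two-inequality check is also sound, and it shows Theorem~\ref{2.9.} is not needed in the $T_0$ case: it uses only the existence of some nested sequence through $U_{x,y}$ together with the one-point-per-step count.
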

 \begin{proof}
   Let $\Psi(x,y)=k$ for some $x,y\in X$. By Theorem \ref{2.9.}, we get a nested sequence $(U_j)_{j\geq 0}$ of open sets around $x$ such that $U_k=U_{x,y}$ and by Remark \ref{2.13..}, we get $|U_j\backslash U_{j-1}|=1, \forall j\geq 1.$ So, $|U_{x,y}|=|U_k|$. By induction, we get $|U_{k}|=|U_{x}|+k.$ Therefore, $\Psi(x,y)=|U_{x,y}\backslash U_{x}|= |U_y\backslash U_x|.$ 
 \end{proof}

\subsection{\textbf{Preorder on finite space}} 

Let $X$ be a finite topological space. We define a preorder (reflexive and transitive relation) on $X$ by $x\leq y $ if $ \Psi(y,x)=0$. By Theorem \ref{2.6}, if $X$ is $T_0$ then this preorder $\leq$ will be antisymmetric. So, for every finite $T_0$ space $X$, we have a partial order $\leq $ on $X$ defined as $x\leq y \iff \Psi(y,x)=0$. Therefore, we can associate a Poset diagram with every finite $T_0$ space. 

	
	
	%
	

The order defined above  can be seen as a characterization of the order introduced by Alexandroff (Ref. \cite{Alexandroff}) to explain  ono-to-one  corresepondence between finite $T_0$ spaces and finite partially ordered sets .\par 
In view of above characterization, the following results of R. E. Stong (Ref. \cite{Stong}) can be restated in terms of furtherness function. 


\begin{proposition}\label{3.8.}
	A function $f:X\longrightarrow Y$ between finite topological spaces is continuous if and only if $\Psi_X(x,y)=0\implies \Psi_Y(f(x),f(y))=0,\forall~ x,y\in X$.
\end{proposition}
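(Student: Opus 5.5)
The plan is to translate the condition into minimal open sets using Theorem \ref{2}: $\Psi_X(x,y)=0$ holds exactly when $y\in U_x$. Equivalently, $U_y\subseteq U_x$, since $U_x$ is open and contains $y$ while $U_y$ is the smallest such set. The same holds in $Y$. So the statement becomes the classical fact (Stong) that $f$ is continuous if and only if $y\in U_x$ implies $f(y)\in U_{f(x)}$. I will prove this directly from the definitions.

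\emph{Forward direction.} Suppose $f$ is continuous and $\Psi_X(x,y)=0$, so $y\in U_x$.
\begin{itemize}
\item The set $U_{f(x)}$ is open in $Y$, so $f^{-1}(U_{f(x)})$ is open in $X$.
\item This preimage contains $x$, so it contains the minimal open set $U_x$, and hence contains $y$.
\item Therefore $f(y)\in U_{f(x)}$, which by Theorem \ref{2} means $\Psi_Y(f(x),f(y))=0$.
\end{itemize}

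\emph{Converse.} Assume the implication holds and let $V\subseteq Y$ be open. We show $f^{-1}(V)$ is open.
\begin{itemize}
\item Take $x\in f^{-1}(V)$. For any $y\in U_x$ we have $\Psi_X(x,y)=0$, so by hypothesis $\Psi_Y(f(x),f(y))=0$, i.e.\ $f(y)\in U_{f(x)}$.
\item Since $V$ is open and contains $f(x)$, we have $U_{f(x)}\subseteq V$. So $f(y)\in V$, which gives $U_x\subseteq f^{-1}(V)$.
\item Hence $f^{-1}(V)=\bigcup_{x\in f^{-1}(V)}U_x$, a finite union of open sets, and so it is open.
\end{itemize}

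There is no substantial obstacle. The only point needing care is the direction of the asymmetry: $\Psi(x,y)=0$ means $y\in U_x$, not $x\in U_y$. This orientation must be matched correctly with Theorem \ref{2} in both $X$ and $Y$.
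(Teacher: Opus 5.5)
Your proof is correct and follows the paper's approach. The paper gives no separate argument: it translates $\Psi(x,y)=0$ into $y\in U_x$ (that is, $y\le x$ for the preorder $x\le y\iff\Psi(y,x)=0$) and cites Stong's theorem that maps between finite spaces are continuous exactly when they are order-preserving. You make the same translation via Theorem \ref{2} and also write out the standard proof of Stong's characterization, which is a fine self-contained addition.
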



\begin{proposition}
	Let $X$ be a topological space and $Y$ be a finite space. If $f,g: X\longrightarrow Y$ are two continuous maps such that $\Psi(f(x),g(x))=0,\forall~ x\in X$, then $f$ and $g$ are homotopic.
\end{proposition}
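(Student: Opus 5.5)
The plan is to use the standard ``straight-line'' homotopy through the Sierpiński interval. The key observation is that the hypothesis $\Psi(f(x),g(x))=0$ means, by Theorem \ref{2}, that $g(x)\in U_{f(x)}$. In the preorder of this subsection, that says $g(x)\leq f(x)$ for every $x\in X$. So the statement is Stong's result that comparable maps are homotopic, restated in the furtherness language, and I would prove it directly.

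Define $H:X\times[0,1]\longrightarrow Y$ by $H(x,t)=f(x)$ for $0\leq t<1$ and $H(x,1)=g(x)$. Then $H(\cdot,0)=f$ and $H(\cdot,1)=g$, so it remains only to prove continuity of $H$. Since $Y$ is finite, every open set of $Y$ is a union of minimal open sets $U_y$. It therefore suffices to show that $H^{-1}(U_y)$ is open for each $y\in Y$.

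Compute
\[
H^{-1}(U_y)=\bigl(f^{-1}(U_y)\times[0,1)\bigr)\cup\bigl(g^{-1}(U_y)\times\{1\}\bigr).
\]
Next I would show that $g^{-1}(U_y)\subseteq f^{-1}(U_y)$. If $g(x)\in U_y$ then $U_{g(x)}\subseteq U_y$. Also $\Psi(f(x),g(x))=0$ gives $g(x)\in U_{f(x)}$, but I need $f(x)\in U_y$, which is the other direction. This means the orientation of the path has to be swapped: put $H(x,t)=g(x)$ for $t<1$ and $H(x,1)=f(x)$, which is then a homotopy from $g$ to $f$, and homotopy is symmetric. With this choice,
\[
H^{-1}(U_y)=\bigl(g^{-1}(U_y)\times[0,1)\bigr)\cup\bigl(f^{-1}(U_y)\times\{1\}\bigr).
\]
If $f(x)\in U_y$, then $g(x)\in U_{f(x)}\subseteq U_y$, so $f^{-1}(U_y)\subseteq g^{-1}(U_y)$. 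Hence
\[
H^{-1}(U_y)=\bigl(g^{-1}(U_y)\times[0,1)\bigr)\cup\bigl(f^{-1}(U_y)\times[0,1]\bigr),
\]
a union of two products of open sets, which is open by continuity of $f$ and $g$.

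The only delicate step is getting this orientation right, namely which map sits on the open part $[0,1)$ and which on the closed endpoint. The inclusion $U_{g(x)}\subseteq U_{f(x)}$, obtained from Theorem \ref{2} (equivalently from the remark that $\Psi(x,y)=0$ implies $U_y\subseteq U_x$), is exactly what makes the union open. Everything else is routine.
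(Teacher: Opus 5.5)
Your proof is correct, and it takes essentially the paper's route. The paper gives no proof of its own: it restates Stong's theorem that comparable maps are homotopic via the preorder $x\le y \iff \Psi(y,x)=0$. You make the same translation, turning $\Psi(f(x),g(x))=0$ via Theorem \ref{2} into $g(x)\in U_{f(x)}$, i.e.\ $g\le f$. You then supply the standard proof the paper outsources: the homotopy equal to $g$ on $[0,1)$ and $f$ at $t=1$, whose preimage of each $U_y$ you correctly show is open. In a final write-up, drop the false start with the opposite orientation and present only the corrected homotopy.
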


\begin{proposition}
	A finite space $X$ is contractible if there exists $x\in X$ such that $\Psi(x,y)=0,\forall~ y\in X$ or $\Psi(y,x)=0,\forall ~ y\in X$.
\end{proposition}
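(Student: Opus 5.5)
The idea is to build an explicit homotopy from the identity to a constant map, using the previous proposition (homotopy of comparable maps). By Theorem \ref{2}, $\Psi(x,y)=0$ means $y\in U_x$, which in the preorder of this subsection reads $y\leq x$. So the two hypotheses say that $x$ is a maximum of $X$ (first case) or a minimum of $X$ (second case). I will treat the two cases in parallel.

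\textbf{Case $\Psi(x,y)=0$ for all $y\in X$.}
\begin{enumerate}
\item Let $c_x:X\longrightarrow X$ be the constant map at $x$ and $\mathrm{id}_X$ the identity; both are continuous.
\item For every $y\in X$ we have $\Psi(c_x(y),\mathrm{id}_X(y))=\Psi(x,y)=0$ by hypothesis.
\item The preceding proposition, applied with $f=c_x$ and $g=\mathrm{id}_X$, gives $c_x\simeq \mathrm{id}_X$.
\item Hence the inclusion of $\{x\}$ and the constant map $X\to\{x\}$ are mutually inverse homotopy equivalences, so $X$ is contractible.
\end{enumerate}

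\textbf{Case $\Psi(y,x)=0$ for all $y\in X$.} Take $f=\mathrm{id}_X$ and $g=c_x$. Then $\Psi(f(y),g(y))=\Psi(y,x)=0$ for all $y$, and the same proposition gives $\mathrm{id}_X\simeq c_x$. Homotopy is a symmetric relation, so it does not matter that the roles of the two maps are swapped.

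\textbf{Backup argument.} If I wanted to avoid the previous proposition, I would write down the homotopy directly. In the first case, define $H:X\times[0,1]\longrightarrow X$ by $H(y,t)=y$ for $t<1$ and $H(y,1)=x$. To check continuity, take an open set $V$. If $x\notin V$, then $H^{-1}(V)=V\times[0,1)$. If $x\in V$, then $V\supseteq U_x=X$, so $H^{-1}(V)=X\times[0,1]$. In both cases the preimage is open.

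In the second case, define instead $H(y,t)=y$ for $t<1$ and $H(y,1)=x$ again. Now $x\in U_y$ for every $y$, so $x$ lies in every nonempty open set. Hence for nonempty open $V$ we get $H^{-1}(V)=V\times[0,1)\cup X\times\{1\}$. This is not obviously open, since $X\times\{1\}$ is not open in $X\times[0,1]$, so I swap the roles. Set $H(y,t)=x$ for $t>0$ and $H(y,0)=y$. Then $H^{-1}(V)=V\times\{0\}\cup X\times(0,1]$ for nonempty open $V$. This is also not obviously open, so the formula needs adjusting.

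\textbf{Main obstacle.} The only delicate point is this continuity check in the second case: one must choose correctly whether the open or the closed end of $[0,1]$ is assigned to the constant map. Doing so amounts to reproving Stong's result, which is why I will rely on the previous proposition. That reduces the proof to the two-line verifications of $\Psi(c_x(y),\mathrm{id}_X(y))=0$ and $\Psi(\mathrm{id}_X(y),c_x(y))=0$ given above.
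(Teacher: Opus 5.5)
Your main argument is correct and is the intended one. The paper gives no separate proof; it treats this as Stong's result (a finite space with a maximum or minimum is contractible) restated through $x\leq y\iff\Psi(y,x)=0$. Your reduction to the preceding proposition, with $f=c_x$, $g=1_X$ (resp.\ $f=1_X$, $g=c_x$), is exactly how that result is proved.

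One correction to your backup: the second candidate $H(y,0)=y$, $H(y,t)=x$ for $t>0$ is in fact continuous. For nonempty open $V$ its preimage is $V\times\{0\}\cup X\times(0,1]=(V\times[0,1])\cup(X\times(0,1])$, which is open, so no adjustment is needed.
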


\begin{proposition}\label{3..8}
	Let $X$ be a finite space and define a relation on $X$ by $x\sim y$ if $\Psi(x,y)=\Psi(y,x)=0$. Then, the quotient $X/$$\sim$ is $T_0$ and  the quotient map is a homotopy equivalence.
\end{proposition}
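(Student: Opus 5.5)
The plan is to reduce Proposition \ref{3..8} to the classical statement of Stong \cite{Stong}, using Theorem \ref{2} to translate $\Psi$ into minimal open sets. By Theorem \ref{2}, $\Psi(x,y)=0$ iff $y\in U_x$, iff $U_y\subseteq U_x$. Hence $x\sim y$ iff $U_x=U_y$. This relation is reflexive and symmetric by definition. It is transitive because the condition $U_x=U_y$ is transitive; alternatively, the triangle inequality Theorem \ref{2.11.} gives $\Psi(x,z)\le \Psi(x,y)+\Psi(y,z)=0$. So $\sim$ is an equivalence relation. Let $q:X\to X/\!\sim$ be the quotient map and give $X/\!\sim$ the quotient topology.

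First I will show that every open set $U\subseteq X$ is saturated. If $x\in U$ and $y\sim x$, then $y\in U_x\subseteq U$. It follows that $q(U)$ is open, since $q^{-1}(q(U))=U$, and that $q$ is an open map. The images of the open sets of $X$ are exactly the open sets of $X/\!\sim$, and the minimal open set of $q(x)$ is $q(U_x)$.

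Next I will prove that $X/\!\sim$ is $T_0$. Let $q(x)\ne q(y)$, and suppose every open set of the quotient containing one of them contains the other. Then $q(y)\in q(U_x)$, so $y\in q^{-1}q(U_x)=U_x$, which gives $\Psi(x,y)=0$. Symmetrically $\Psi(y,x)=0$. Hence $x\sim y$, a contradiction. Equivalently, the criterion of Theorem \ref{2.6} holds in the quotient.

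For the homotopy equivalence, I will choose a section $s:X/\!\sim\to X$ picking one representative of each class. Then $q\circ s=\mathrm{id}$. To see that $s$ is continuous, it suffices that $s^{-1}(U)=q(U)$ for every open $U$; this holds because $U$ is saturated, so $s(c)\in U$ iff the class $c$ lies in $q(U)$. Finally, $s\circ q$ and $\mathrm{id}_X$ are continuous maps with $\Psi(s(q(x)),x)=0$ for all $x$, since $s(q(x))\sim x$. The earlier homotopy proposition, which states that $\Psi(f(x),g(x))=0$ for all $x$ implies $f\simeq g$, then gives $s\circ q\simeq \mathrm{id}_X$. Therefore $q$ is a homotopy equivalence with homotopy inverse $s$.

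The main obstacle is the bookkeeping around the quotient topology: verifying that the open sets are saturated and hence that $s$ is continuous. The homotopy step itself is immediate from the cited proposition.
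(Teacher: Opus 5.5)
Your proof is correct and follows essentially the paper's route. The paper gives no separate proof: it obtains Proposition \ref{3..8} by restating Stong's classical result through the characterization $\Psi(x,y)=0 \iff y\in U_x$, and your argument is that standard proof written out (open sets are saturated, a section $s$ is continuous with $q\circ s=\mathrm{id}$, and $s\circ q$ is comparable to $\mathrm{id}_X$, hence homotopic to it by the cited homotopy proposition).
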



Let $\pi:X\longrightarrow X/$$\sim$ be the natural map such that $\pi(x)=[x]$ for $x\in X$. As $\pi$ is continuous, by Proposition \ref{3.8.}, we get $\Psi_X(x,y)=0\implies \Psi_{X/\sim}([x],[y])=0,\forall~ x,y\in X$. Thus, $x\leq y \implies [x]\leq [y],\forall~ x,y\in X$. By Theorem \ref{2}, we get $U_{[x]}=\{[y]|\Psi(x,y)=0\}.$
In the next result, we observe that the furtherness between two equivalence classes of $X/$$\sim$  is the same as furtherness between corresponding elements. 

\begin{theorem}\label{3..9}
	Let $X$ be a finite space and $\sim$ be a equivalence relation on $X$ such that $x\sim y$ if $\Psi_X(x,y)=\Psi_X(y,x)=0$. Then $\Psi_{X/\sim}([x],[y])=\Psi_{X}(x,y)$ for all $x,y\in X$.
\end{theorem}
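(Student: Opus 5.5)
The plan is to compare the lattice of open sets of $X$ with that of $X/\sim$ through the quotient map $\pi$, and show that nested sequences correspond exactly, index by index.

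\textbf{Step 1: saturation.} Every open set $U$ of $X$ is a union of equivalence classes. Indeed, if $a\in U$ and $a\sim b$, then $\Psi_X(a,b)=0$, so $b\in U_a\subseteq U$ by Theorem \ref{2}. Hence $U=\pi^{-1}(\pi(U))$.

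\textbf{Step 2: bijection of open sets.} Since $\pi$ is a quotient map, $V\subseteq X/\sim$ is open iff $\pi^{-1}(V)$ is open. Together with Step 1, the maps $U\mapsto \pi(U)$ and $V\mapsto \pi^{-1}(V)$ are mutually inverse, inclusion-preserving bijections between $\mathcal{T}_X$ and $\mathcal{T}_{X/\sim}$. Inclusion-preserving is clear in both directions, and strict inclusions are preserved because the maps are bijections. In particular:
\begin{itemize}
\item $\pi(U_x)=U_{[x]}$, since both are the smallest open set containing $x$, respectively $[x]$, and the bijection preserves minimality.
\item $U\subset U'$ has no open set strictly between them iff the same holds for $\pi(U)\subset\pi(U')$.
\end{itemize}

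\textbf{Step 3: nested sequences correspond.} By Step 2, $(U_j)_{j\geq0}$ is a nested sequence of open sets around $x$ in $X$ iff $(\pi(U_j))_{j\geq0}$ is a nested sequence around $[x]$ in $X/\sim$. Every nested sequence around $[x]$ arises this way, by pulling back along $\pi^{-1}$.

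\textbf{Step 4: comparing indices.} By saturation, $y\in U_k$ iff $[y]\in\pi(U_k)$. So for each pair of corresponding sequences, the least index $k$ with $y\in U_k$ equals the least index with $[y]\in\pi(U_k)$. Taking the minimum over all nested sequences, which correspond bijectively by Step 3, gives $\Psi_{X/\sim}([x],[y])=\Psi_X(x,y)$.

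\textbf{Main obstacle.} The only delicate point is Step 2, specifically checking that the quotient topology is exactly $\{\pi(U): U\in\mathcal{T}_X\}$. This relies on the saturation in Step 1, which shows that $\pi$ is an open map and that $\pi(U)$ is open. Once this is in place, the rest is bookkeeping.
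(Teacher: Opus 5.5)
Your proof is correct and is essentially the paper's argument. The paper also transfers nested sequences across $\pi$ in both directions: it builds $W_j=W_{j-1}\cup U_{[a_j]}$ from the generators of Lemma~\ref{2.8.} (this is exactly $\pi(V_j)$) and $U'_j=U'_{j-1}\cup U_{b_j}$ (exactly $\pi^{-1}(U_j)$), and proves the two inequalities separately. Your saturation step and the resulting order isomorphism $\mathcal{T}_X\cong\mathcal{T}_{X/\sim}$ supply two things the paper leaves unchecked: that the transferred sequences are again nested (no open set strictly between consecutive terms), and that $\pi(U_a)=U_{[a]}$.
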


\begin{proof} Let $\Psi_X(x,y)=k$. Then there exists a nested sequence $(V_j)_{j\geq 0}$ of open sets around $x$ such that $y\in V_k$. By Lemma \ref{2.8.}, $V_j= V_{j-1}\cup U_{a_j}$, for some $a_j\in X\backslash V_{j-1}$. So, $y\in U_{a_k}$ and $y\notin U_{a_l}$ for $l<k$.
	For $[a]\in X/$$\sim,$ we have  $U_{[a]}=\{[b]|\Psi_X(a,b)=0\}$. Now, we construct a nested sequence $(W_j)_{j\geq0}$ of open sets around $[x]$ such that $W_{j} = W_{j-1}\cup U_{[a_{j}]}$. As $[y]\in U_{[a_k]}$, $[y]\in W_k$. Therefore, $\Psi_{X/\sim}([x],[y])\leq k.$\par 
	If $\Psi_{X/\sim}([x],[y])< k,$ then there exist a nested sequence $(U_j)_{j\geq0}$ of open sets around $[x]$ such that $[y]\in U_m$ for some $m<k$. Again by Lemma \ref{2.8.}, we have $U_j= U_{j-1}\cup U_{[b_j]}$, for some $b_j\in X\backslash U_{j-1}$. So, $[y]\in U_{[b_m]} \implies y\leq b_m \implies y\in U_{b_m}$. Now, consider a nested sequence $(U'_j)_{j\geq0}$ of open sets around $x$ such that $U'_j= U'_{j-1}\cup U_{b_j}$. So, $y\in U_m$, and hence $\Psi(x,y)\leq m<k$, a contradiction. Therefore, $\Psi_{X/\sim}([x],[y])=\Psi_{X}(x,y)$ for all $x,y\in X$.
\end{proof}

Using  Corollary \ref{2.14} and Theorem \ref{3..9}, we get a characterization of furtherness in term of cardinality of minimal open sets of its quotient space $X/$$\sim$ defined in Propostion \ref{3..8}.

\begin{corollary}
	Let $X$ be a finite space and $\sim$ be a equivalence relation on $X$ such that $x\sim y$ if $\Psi_X(x,y)=\Psi_X(y,x)=0$. Then $\Psi(x,y)= |U_{[y]}\backslash U_{[x]}|,\forall x,y\in X$.
\end{corollary}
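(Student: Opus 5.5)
The plan is to chain Theorem \ref{3..9} with Corollary \ref{2.14}, applied to the quotient space rather than to $X$. The first step is to invoke Proposition \ref{3..8}, which guarantees that $X/\sim$ is a finite $T_0$ space. That is exactly what is needed to apply Corollary \ref{2.14} to $X/\sim$: for all classes $[x],[y]$ we get
$$\Psi_{X/\sim}([x],[y])=|U_{[y]}\backslash U_{[x]}|,$$
where $U_{[x]}$ and $U_{[y]}$ denote the minimal open sets in the quotient topology.

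The second step is to use Theorem \ref{3..9}, which gives $\Psi_{X/\sim}([x],[y])=\Psi_X(x,y)$ for all $x,y\in X$. Combining the two equalities yields $\Psi(x,y)=|U_{[y]}\backslash U_{[x]}|$, as claimed.

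The only point needing care is that $U_{[y]}$ in the statement must be read as the minimal open set of $[y]$ in $X/\sim$ (a set of classes), and not as the image of $U_y$ counted in $X$. The remark preceding Theorem \ref{3..9} gives $U_{[a]}=\{[b]\mid \Psi_X(a,b)=0\}$. I would check that this agrees with the minimal open neighbourhood of $[a]$ in the quotient topology, which follows from Theorem \ref{2} applied in $X/\sim$ together with Theorem \ref{3..9} in the case $k=0$. With that identification settled, there is no real obstacle: the corollary is a direct composition of the two cited results, and the hardest part is only this bookkeeping of which space the minimal open sets live in.
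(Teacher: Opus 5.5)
Your proposal is correct and matches the paper's argument exactly: Proposition~\ref{3..8} makes $X/{\sim}$ a finite $T_0$ space, Corollary~\ref{2.14} applied there gives $\Psi_{X/{\sim}}([x],[y])=|U_{[y]}\backslash U_{[x]}|$, and Theorem~\ref{3..9} identifies the left side with $\Psi_X(x,y)$. The identification of $U_{[a]}$ that you flag as needing care is harmless but unnecessary, since Corollary~\ref{2.14} applied to $X/{\sim}$ already refers to the quotient's own minimal open sets.
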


We can identify beat points \cite{Barmak} of a space from the furtherness between its points.

\begin{remark}\label{3.9}
	{\normalfont
		Let $X$ be a finite space. Then $x\in X$ is a down beat point of $X$ if there exists exactly one $y\in X$ such that $\Psi(x,y)=0$ and there does not exist any $z\in X$ such that $\Psi(x,z)$ and $\Psi(z,y)$ are both zero at the same time.\par 
		Similarly, $x\in X$ is a up beat point of $X$ if there exists exactly one $y\in X$ such that $\Psi(y,x)=0$ and there does not exist any $z\in X$ such that $\Psi(y,z)$ and $\Psi(z,x)$ are both zero at the same time. Recall that a finite $T_0$ space with no beat points is called  minimal finitie space (Ref. \cite{Barmak}).
	}
\end{remark}

\begin{proposition}
	Let  $f$  be a continuous map from a minimal finite space $X$ to itself. Then $\Psi(f(x),x)=0,\forall~ x\in X\implies f=1_X$.
\end{proposition}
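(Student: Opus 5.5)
This is the furtherness version of Stong's result that a map $f\colon X\to X$ of a minimal finite space with $f\geq 1_X$ (or $f\leq 1_X$) is the identity. The condition $\Psi(f(x),x)=0$ means $x\in U_{f(x)}$, that is, $x\leq f(x)$ in the partial order $x\leq y\iff\Psi(y,x)=0$. The plan is to translate the hypothesis into this order, prove the order statement by induction, and translate back.

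\textbf{Setup.} Since $X$ is minimal, it is $T_0$, so by Theorem \ref{2.6} the relation $\leq$ is a partial order. By Proposition \ref{3.8.}, $f$ is order preserving: $x\leq y\implies f(x)\leq f(y)$. The hypothesis says $x\leq f(x)$ for all $x\in X$. We must show $f(x)=x$ for every $x$. We argue by contradiction: suppose the set $A=\{x\in X\mid f(x)\neq x\}$ is nonempty, and choose $x\in A$ maximal in $A$ with respect to $\leq$. This is possible because $X$ is finite.

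\textbf{Key step: $x$ is an up beat point.} Let $y>x$. Then $y\notin A$ by maximality, so $f(y)=y$. Monotonicity gives $y=f(y)\geq f(x)$. Hence every element strictly above $x$ lies above $f(x)$. Moreover $f(x)>x$, since $x\leq f(x)$ and $f(x)\neq x$. So $f(x)$ is the minimum of $\{y\mid y>x\}$, which is exactly the condition for $x$ to be an up beat point.

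\textbf{Translation and conclusion.} In the language of Remark \ref{3.9}, this says $U^{op}_x\setminus\{x\}$ has a minimum. Some care is needed with the phrasing of Remark \ref{3.9}: it should be read as saying there is a unique element $y=f(x)$ covering $x$ (so $\Psi(y,x)=0$ with $y\neq x$ and nothing strictly in between). Indeed, any $z$ with $x<z$ satisfies $f(x)\leq z$, so no other cover exists. An up beat point contradicts the minimality of $X$. Hence $A=\emptyset$ and $f=1_X$.

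\textbf{Main obstacle.} The only delicate point is choosing the extremal element in the right direction. For the hypothesis $x\leq f(x)$, taking $x$ maximal in $A$ produces an up beat point, whereas taking $x$ minimal would not work. The rest is routine bookkeeping between the order $\leq$ and $\Psi$ via Theorem \ref{2}.
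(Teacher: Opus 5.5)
Your proof is correct and follows essentially the paper's route. The paper gives no separate argument: it presents the proposition as Stong's lemma (a self-map $f$ of a minimal finite space with $f\geq 1_X$ or $f\leq 1_X$ is the identity), translated through the order $x\leq y\iff\Psi(y,x)=0$, and your argument is the standard proof of that lemma, taking a $\leq$-maximal non-fixed point and showing it is an up beat point whose unique cover is $f(x)$. Your caution about reading Remark \ref{3.9} as ``$x$ has exactly one cover'' is warranted, since taken literally its wording is degenerate ($y=x$ always satisfies $\Psi(y,x)=0$).
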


\subsection{\textbf{Furtherness function on product of finite spaces}}

Let $X$ and $Y$ be two finite spaces.
In this section, we observe that how the furtherness function on the product space $X\times Y$ is related with the furtherness functions on $X$ and $Y$.\\

\begin{definition}[Furtherness preserving map] A furtherness preserving map between two finite topological spaces $X$ and $Y$ is a funtion $f:X\longrightarrow Y$ such that $\Psi_{X}(a,b)=\Psi_{Y}(f(a),f(b)),\forall a,b\in X$.
	
\end{definition} 

By Proposition \ref{3.8.}, we get that every furtherness preserving map is continuous. Notice that any map between two trivial spaces is always  a furtherness preserving map, and any injective map between two discrete spaces preserves the furtherness. Moreover, any homeomorphism between two finite spaces is a furtherness preserving map.

Using Corollary \ref{2.14} along with set-theoretic identities,we obtain the furtherness on the product of finite $T_0$ spaces.

\begin{theorem}\label{4.1}
	Let $X\times Y$ be the product of two finite $T_0$ spaces $X$ and $Y$. Then $\Psi_{X\times Y}((a,b),(c,d))=\Psi_{X}(a,c)|U_d|+\Psi_{Y}(b,d)|U_c|-\Psi_{X}(a,c)\Psi_{Y}(b,d),$ is the furtherness on $X\times Y$, where  $(a,b),(c,d)\in X\times Y,$ 
\end{theorem}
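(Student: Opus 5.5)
The plan is to reduce everything to Corollary \ref{2.14}, which expresses furtherness in a finite $T_0$ space as the cardinality of a difference of minimal open sets, and then to finish with a short counting argument.

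First I will check the two structural facts needed to apply that corollary to $X\times Y$. The first is that $X\times Y$ is a finite $T_0$ space, a standard fact: if $(a,b)\neq(c,d)$, say $a\neq c$, then an open set $V\subseteq X$ separating $a$ and $c$ gives the open set $V\times Y$ separating the two pairs. The second is that the minimal open set of a point in the product is $U_{(c,d)}=U_c\times U_d$. To see this, note that $U_c\times U_d$ is open and contains $(c,d)$. Conversely, any open set containing $(c,d)$ contains a basic open $V\times W$ with $c\in V$, $d\in W$. Then $U_c\subseteq V$ and $U_d\subseteq W$, so $U_c\times U_d$ is contained in every open neighbourhood of $(c,d)$.

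Next I apply Corollary \ref{2.14} in $X\times Y$:
$$\Psi_{X\times Y}((a,b),(c,d))=|U_{(c,d)}\setminus U_{(a,b)}|=|(U_c\times U_d)\setminus(U_a\times U_b)|.$$
Since $(U_c\times U_d)\cap(U_a\times U_b)=(U_c\cap U_a)\times(U_d\cap U_b)$, and a set difference has cardinality $|A|-|A\cap B|$, this equals
$$|U_c||U_d|-|U_c\cap U_a|\,|U_d\cap U_b|.$$

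Now I use Corollary \ref{2.14} in each factor. Writing $\alpha=\Psi_X(a,c)=|U_c\setminus U_a|$ and $\beta=\Psi_Y(b,d)=|U_d\setminus U_b|$, we get $|U_c\cap U_a|=|U_c|-\alpha$ and $|U_d\cap U_b|=|U_d|-\beta$. Substituting and expanding gives
$$|U_c||U_d|-(|U_c|-\alpha)(|U_d|-\beta)=\alpha|U_d|+\beta|U_c|-\alpha\beta,$$
which is exactly the stated formula.

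The only step needing any care is identifying the minimal open set in the product topology, together with the $T_0$ property that makes Corollary \ref{2.14} applicable there. Once those are in place, the rest is inclusion--exclusion.
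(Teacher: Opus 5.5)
Your proof is correct and follows the route the paper indicates: the paper only says the result follows from Corollary \ref{2.14} together with set-theoretic identities, and you carry this out by applying that corollary in $X\times Y$ with $U_{(c,d)}=U_c\times U_d$, then expanding $|(U_c\times U_d)\setminus(U_a\times U_b)|=|U_c||U_d|-|U_c\cap U_a|\,|U_d\cap U_b|$. You also supply the details the paper leaves implicit, namely that the product is $T_0$ and the description of its minimal open sets.
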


\begin{example}
Let $X=\{a,b\}$ and $Y=\{x,y\}$ be two spsces, where $\tau_X=\{\emptyset,\{a\},X\}, \&~$ $\tau_Y=\{\emptyset,\{x\},Y\}$ are the topologies on $X$ and $Y$, repectively. It is easy to observe that, $\Psi_X(a,b)=1$, $\Psi_Y(x,y)=1$ and $\Psi_{X\times Y}((a,x),(b,y))=3$.
\end{example}


Let $X/$$\sim$ and $Y/$$\sim$ be quotient spaces of two finite spaces $X$ and $Y$, respectively, as defined in Proposition \ref{3..8}. Then both  $X/$$\sim$ $\&$ $Y/$$\sim$ are $T_0$ spaces, and so is their product  $X/$$\sim$ $\times Y/$$\sim$. Assume that $x_1,X_2\in X$ and $y_1,y_2\in Y$. Then, $x_1\sim x_2$ $\&$ $ y_1\sim y_2$ $\iff$ $\Psi_X(x_1,x_2)=\Psi_X(x_2,x_1)=0$ and $\Psi_Y(y_1,y_2)=\Psi_Y(y_2,y_1)=0 \iff U_{x_1}=U_{x_2}$ and $U_{y_1}=U_{y_2}$ $\iff$ $U_{x_1}\times U_{y_1} = U_{x_2}\times U_{y_2} \iff U_{(x_1,y_1)}=U_{(x_2,y_2)}$ $\iff$ $\Psi_{X\times Y}((x_1,y_1),(x_2,y_2))=\Psi_{X\times Y}((x_2,y_2),(x_1,y_1))=0$ $\iff$ $(x_1,y_1)\sim (x_2,y_2)$. This implies that the quotient space $(X\times Y)/$$\sim$ is homeomorphic to the product  $X/$$\sim$ $\times Y/$$\sim$. And, a map  $f:(X\times Y)/$$\sim~ \longrightarrow  X/$$\sim \times Y/$$\sim$ defined by  $f([x,y])=([x],[y])$ is a homeomorphism. So, $\Psi_{(X\times Y)/\sim}([(a,b)],[(c,d)])=\Psi_{X/\sim \times Y/\sim}(([a],[b]),([c],[d]))$.

Now, using above observation, we give the furtherness function on the product of two finite spaces.

\begin{theorem}\label{4.1.}
	Let $X\times Y$ be the product of two finite spaces $X$ and $Y$. Then $\Psi_{X\times Y}((a,b),(c,d))=\Psi_{X}(a,c)|U_{[d]}|+\Psi_{Y}(b,d)|U_{[c]}|-\Psi_{X}(a,c)\Psi_{Y}(b,d),$ is the furtherness on $X\times Y$, where $(a,b),(c,d)\in X\times Y,$ 
\end{theorem}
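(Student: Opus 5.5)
The plan is to reduce to the $T_0$ case via the quotient construction and then apply Theorem \ref{4.1}. First, by Theorem \ref{3..9} applied to the finite space $X\times Y$,
\[
\Psi_{X\times Y}((a,b),(c,d))=\Psi_{(X\times Y)/\sim}([(a,b)],[(c,d)]).
\]
By the observation preceding the statement, the map $f([(x,y)])=([x],[y])$ is a homeomorphism $(X\times Y)/\sim\;\to X/\sim\times Y/\sim$. Homeomorphisms are furtherness preserving. This is immediate from the definition: a homeomorphism carries nested sequences of open sets around a point bijectively onto nested sequences around its image, because it induces an order isomorphism of the topologies and $f(U_p)=U_{f(p)}$. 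Hence the right-hand side equals $\Psi_{X/\sim\times Y/\sim}(([a],[b]),([c],[d]))$.

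Next, $X/\sim$ and $Y/\sim$ are finite $T_0$ spaces by Proposition \ref{3..8}. So Theorem \ref{4.1} applies and gives
\[
\Psi_{X/\sim}([a],[c])\,|U_{[d]}|+\Psi_{Y/\sim}([b],[d])\,|U_{[c]}|-\Psi_{X/\sim}([a],[c])\,\Psi_{Y/\sim}([b],[d]).
\]
Here $U_{[d]}$ denotes the minimal open set of $[d]$ in $Y/\sim$, and $U_{[c]}$ that of $[c]$ in $X/\sim$. This is the interpretation the statement intends, since the formula must mirror Theorem \ref{4.1}, where the $\Psi_X$-term is weighted by $|U_d|$ with $d\in Y$. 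Finally, applying Theorem \ref{3..9} separately to $X$ and to $Y$ replaces $\Psi_{X/\sim}([a],[c])$ by $\Psi_X(a,c)$ and $\Psi_{Y/\sim}([b],[d])$ by $\Psi_Y(b,d)$. This yields exactly the claimed formula.

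The main point needing care is the claim that homeomorphisms preserve furtherness, which the paper states only in passing. I would verify it directly. Let $h$ be a homeomorphism. The map $V\mapsto h(V)$ is an inclusion-preserving bijection between the topologies with an inclusion-preserving inverse, so it preserves covering relations (``no open set in between''). It also sends $U_x$ to $U_{h(x)}$. Therefore nested sequences around $x$ correspond to nested sequences around $h(x)$ with the same indices. Since $y\in V_k$ if and only if $h(y)\in h(V_k)$, the minima defining $\Psi(x,y)$ and $\Psi(h(x),h(y))$ coincide.

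A secondary check is that the cardinalities in the final formula refer to the quotient spaces and not to $X$ and $Y$ themselves. This matters because $|U_d|$ in $Y$ can be larger than $|U_{[d]}|$ when $d$ has $\sim$-equivalent points.
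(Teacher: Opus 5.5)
Your proposal is correct and follows the paper's proof step for step. You apply Theorem~\ref{3..9} to $X\times Y$, transport along the homeomorphism $(X\times Y)/{\sim}\;\cong\; X/{\sim}\times Y/{\sim}$, use Theorem~\ref{4.1} for the $T_0$ product, and apply Theorem~\ref{3..9} again on each factor. Your check that homeomorphisms preserve furtherness, and your remark that $|U_{[c]}|$ and $|U_{[d]}|$ are computed in the quotients $X/{\sim}$ and $Y/{\sim}$, spell out what the paper leaves implicit.
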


\begin{proof}
	Let $(a,b),(c,d)\in X\times Y.$ By Theorem \ref{3..9}, $\Psi_{X\times Y}((a,b),(c,d))=\Psi_{(X\times Y)/\sim}\\
	([(a,b)],[(c,d)])$. As $(X\times Y)/$$\sim$ is homeomorphic to the product  $X/$$\sim$ $\times Y/$$\sim$, we get $\Psi_{(X\times Y)/\sim}([(a,b)],[(c,d)])=\Psi_{X/\sim \times Y/\sim}(([a],[b]),([c],[d]))$. As  $X/$$\sim$ $\times Y/$$\sim$ is $T_0$, by Theorem \ref{4.1},  $\Psi_{X/\sim \times Y/\sim}(([a],[b]),([c],[d]))= \Psi_{X/\sim}([a],[c])|U_{[d]}|+\Psi_{Y/\sim}([b],[d])$
	$|U_{[c]}|-\Psi_{X/\sim}([a],[c])\Psi_{ Y/\sim}([b],[d]).$ Again, by using Theorem \ref{3..9}, we get $\Psi_{X\times Y}((a,b),\\
	(c,d))=\Psi_{X}(a,c)|U_{[d]}|+\Psi_{Y}(b,d)|U_{[c]}|-\Psi_{X}(a,c)\Psi_{Y}(b,d).$
\end{proof}

Let $X_i/$$\sim$ be quotient space of finite spaces $X_i, 1\leq i\leq n$. Then $(\prod_{i=1}^n X_i)/$$\sim$ is homeomorphic to $\prod_{i=1}^n (X_i/$$\sim)$, where $x\sim y$ in  $\prod_{i=1}^n X_i \iff x_i \sim y_i, \forall~ 1\leq i\leq n$, for  $x=(x_1,x_2,...,x_n),y=(y_1,y_2,...,y_n)\in  \prod_{i=1}^n X_i$.
Using induction, we obtain furtherness for the product of finite spaces.

\begin{corollary}   
	
	Let $X$ be the product of finite spaces $X_i$, $1\leq i\leq n$, for $n\in \mathbb{N}$. Then $\Psi_{X}(a,b)= |U_{[b]}\backslash U_{[a]}|= |(U_{[b_1]}\times U_{[b_2]}\times ... U_{[b_n]})\backslash(U_{[a_1]}\times U_{[a_2]}\times ... U_{[a_n]})|$ is the furtherness on $X$, where $a=(a_1,a_2,...,a_n)$ and $b=(b_1,b_2,...,b_n)\in X.$ 
\end{corollary}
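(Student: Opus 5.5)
The corollary follows by reducing to the $T_0$ case through the quotient and then counting with minimal open sets in the product. Put $X=\prod_{i=1}^n X_i$ with the relation $\sim$ of Proposition \ref{3..8}, namely $a\sim b$ iff $\Psi_X(a,b)=\Psi_X(b,a)=0$, i.e. iff $U_a=U_b$. The plan has three steps.

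\textbf{Step 1 (minimal open sets in the product).} I first record that in a finite product the minimal open set is $U_{(a_1,\dots,a_n)}=U_{a_1}\times\cdots\times U_{a_n}$. This holds because basic open sets of the product are products of open sets, and each $U_{a_i}$ is the smallest open set containing $a_i$. It follows that $U_a=U_b$ iff $U_{a_i}=U_{b_i}$ for all $i$. So the relation $\sim$ on $X$ is exactly the coordinatewise relation. This is the $n$-fold version of the computation preceding Theorem \ref{4.1.}, and it shows that $\phi:X/\!\sim\;\to\prod_i (X_i/\!\sim)$, $[a]\mapsto([a_1],\dots,[a_n])$, is a well-defined bijection.

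\textbf{Step 2 ($\phi$ is a homeomorphism).} To check this, I would compare minimal open sets on both sides. In $X/\!\sim$, Theorem \ref{2} gives $U_{[a]}=\{[b]: b\in U_a\}$. Under $\phi$ this is sent to $\prod_i\{[b_i]: b_i\in U_{a_i}\}=\prod_i U_{[a_i]}$. Since a finite topology is determined by its minimal open sets, $\phi$ is a homeomorphism. Homeomorphisms preserve furtherness, so
\[
\Psi_{X/\sim}([a],[b])=\Psi_{\prod (X_i/\sim)}(\phi[a],\phi[b]),
\]
and under the identification $U_{[b]}$ corresponds to $U_{[b_1]}\times\cdots\times U_{[b_n]}$.

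\textbf{Step 3 (conclude).} By Theorem \ref{3..9}, $\Psi_X(a,b)=\Psi_{X/\sim}([a],[b])$. The space $X/\!\sim$ is $T_0$ by Proposition \ref{3..8}, so Corollary \ref{2.14} gives $\Psi_{X/\sim}([a],[b])=|U_{[b]}\setminus U_{[a]}|$. This is the first equality. Transporting by $\phi$, which maps $U_{[b]}$ and $U_{[a]}$ bijectively onto the two product sets, gives the second equality.

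An alternative would be induction on $n$ using Theorem \ref{4.1.} and inclusion–exclusion. I prefer the direct route above because it avoids that algebra.

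The main obstacle is Step 2. One has to verify carefully that the quotient topology on $X/\!\sim$ has minimal open sets given by the images of the $U_a$, which requires $\pi^{-1}(\pi(U_a))=U_a$, and that this matches the product of the quotient topologies. The saturation $\pi^{-1}(\pi(U_a))=U_a$ holds because $U_a$ is a union of full equivalence classes: if $b\in U_a$ and $c\sim b$, then $c\in U_c=U_b\subseteq U_a$.
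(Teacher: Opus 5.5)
Your proposal is correct and takes essentially the paper's route: you reduce to the $T_0$ quotient via Theorem \ref{3..9}, identify $(\prod_i X_i)/{\sim}$ with $\prod_i (X_i/{\sim})$, and apply Corollary \ref{2.14}. The only difference is that the paper reaches the $n$-fold case by induction from the two-factor case, whereas you build the $n$-fold identification directly, and more carefully, by matching minimal open sets and checking saturation.
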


\section{\textbf{Topology induced by furtherness}}

By Remark \ref{2.14.}, the furtherness function $\Psi$ on a finite space $X$ is an asymmetric semidistance. For each $x\in X$ and $n\in \mathbb{N}$,  $B^+(x,n)=\{y\in X|\Psi(x,y)<n\}$ and $B^-(x,n)=\{y\in X|\Psi(y,x)<n\}$ are the forward and the backward open balls of radius $n$ and centered at $x$, respectively (Ref. \cite{Mennucci}).

\subsection{\textbf{Topology generated by balls}}

Let $\mathcal{B}^+$ be the collection of forward open balls of $(X,\Psi)$. Clearly, $X=\bigcup\{B^+(x,n)
|x\in X, n\in \mathbb{N}\}$, and if $x\in B^+(a,n_1)\cap B^+(b,n_2)$ for some $a,b\in X$ and $n_1,n_2\in\mathbb{N}$, then $x\in B^+(x,1)\subseteq B^+(a,n_1)\cap B^+(b,n_2).$ So, the family $\mathcal{B}^+$ forms a basis for a topology on $X$. For each $x\in X,$ we get $U_x=B^+(x,1).$ This gives the following result. 

\begin{theorem}
	The collection 
	of forward open balls in a finite space induces the underlying topology of the space.
\end{theorem}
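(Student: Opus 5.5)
The plan is to show that the topology $\mathcal{T}^+$ generated by the basis $\mathcal{B}^+$ equals the original topology $\mathcal{T}$ by proving both inclusions. The key identity is $U_x=B^+(x,1)$, which follows from Theorem \ref{2}: $B^+(x,1)=\{y\in X\mid \Psi(x,y)<1\}=\{y\mid \Psi(x,y)=0\}=U_x$.

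For $\mathcal{T}\subseteq\mathcal{T}^+$, take an open set $V\in\mathcal{T}$. Every $x\in V$ satisfies $U_x\subseteq V$, so $V=\bigcup_{x\in V}U_x=\bigcup_{x\in V}B^+(x,1)$. This is a union of basis elements, hence $V\in\mathcal{T}^+$.

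For $\mathcal{T}^+\subseteq\mathcal{T}$, it suffices to show that each ball $B^+(a,n)$ is open in $\mathcal{T}$. Take $y\in B^+(a,n)$ and $z\in U_y$; I will show $z\in B^+(a,n)$, so that $B^+(a,n)=\bigcup_{y\in B^+(a,n)}U_y$ is open. Since $z\in U_y$, Theorem \ref{2} gives $\Psi(y,z)=0$. The triangle inequality, Theorem \ref{2.11.}, then yields
\[
\Psi(a,z)\leq \Psi(a,y)+\Psi(y,z)=\Psi(a,y)<n,
\]
so $z\in B^+(a,n)$.

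The only step needing care is this downward closure of the balls, and it reduces immediately to the triangle inequality together with the characterization of $U_y$ by zero furtherness. With both inclusions in hand, the topology induced by $\mathcal{B}^+$ coincides with $\mathcal{T}$.
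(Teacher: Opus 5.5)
Your proof is correct and follows the paper's own argument. The paper rests on the same two facts: $U_x=B^+(x,1)$ (Theorem \ref{2}), and $B^+(x,1)\subseteq B^+(a,n)$ whenever $x\in B^+(a,n)$. It uses the second to check the basis axioms and asserts it as clear. You justify that containment via the triangle inequality, and you spell out the two inclusions $\mathcal{T}\subseteq\mathcal{T}^+$ and $\mathcal{T}^+\subseteq\mathcal{T}$ that the paper leaves implicit.
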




Recall that the set of closed subsets of a finite space $X$ with topology $\mathcal{T}$ is also a topology on the underlying set $X$. The set $X$ with this topology is 
called the opposite topology of $\mathcal{T}$ and it is denoted by $X^{op}$. Thus, $X^{op}$ represents the same underlying set $X$ with the topology where the open sets are precisely the closed sets of $\mathcal{T}$ (Ref. \cite{Barmak}).\par 
It is easy to observe that the closure of $A\subseteq X$ in  $X$ is equal to $\{y\in X|\Psi(y,A)=0\}$. Let $U_x^{op}$ be the minimal open subset of $X^{op}$ containing $x$. Thus,  $U_x^{op}$ is the smallest closed subset of $X$ containg $x$. 
Therefore, the closure of $\{x\}$ in  $X$ is $U_x^{op}=
\{y\in X|\Psi(y,x)=0\}=B^-(x,1),\forall x\in X$. 



Let $\mathcal{B}^-$ be the collection of backward open balls of $(X,\Psi)$.
Similar to $\mathcal{B}^+$ there is a topology on $X$ for which $\mathcal{B}^-$ is a basis. 

\begin{theorem}
	The collection 
	of backward open balls in a finite space induces the opposite topology of the space.
\end{theorem}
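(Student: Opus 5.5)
We need to prove that the topology generated by the backward balls $\mathcal{B}^-$ coincides with the topology of $X^{op}$, i.e. with the family of closed subsets of $X$. The plan mirrors the argument for forward balls. We show two things: every backward ball is closed in $X$, and every closed set of $X$ is a union of backward balls.

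\textbf{Step 1: $\mathcal{B}^-$ is a basis.} The union of all $B^-(x,n)$ is $X$, since $x\in B^-(x,1)$ because $\Psi(x,x)=0$. Next, suppose $x\in B^-(a,n_1)\cap B^-(b,n_2)$. We claim $B^-(x,1)$ lies in this intersection. Let $y\in B^-(x,1)$, so $\Psi(y,x)=0$. By Theorem \ref{2.11.},
$$\Psi(y,a)\le \Psi(y,x)+\Psi(x,a)=\Psi(x,a)<n_1,$$
and similarly $\Psi(y,b)<n_2$. Hence $x\in B^-(x,1)\subseteq B^-(a,n_1)\cap B^-(b,n_2)$, and $\mathcal{B}^-$ is a basis for some topology $\mathcal{T}^-$.

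\textbf{Step 2: every backward ball is closed in $X$, so $\mathcal{T}^-\subseteq$ topology of $X^{op}$.} Fix $B=B^-(x,n)$. We show $\overline{\{y\}}\subseteq B$ for each $y\in B$. Since $X$ is finite, the closure of $B$ is then the finite union $\bigcup_{y\in B}\overline{\{y\}}$, which equals $B$, so $B$ is closed.

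By the preceding discussion, $\overline{\{y\}}=U^{op}_y=\{z\mid\Psi(z,y)=0\}$. So take $z$ with $\Psi(z,y)=0$. The triangle inequality gives
$$\Psi(z,x)\le\Psi(z,y)+\Psi(y,x)<n,$$
so $z\in B$. This triangle-inequality step is the only real content, and it is also where the direction of the asymmetry must be handled carefully. One has to use $\Psi(z,y)$ rather than $\Psi(y,z)$, and that is exactly why backward balls, not forward balls, give closed sets.

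\textbf{Step 3: every closed set is $\mathcal{T}^-$-open, so the topologies coincide.} Let $C$ be closed in $X$. For each $x\in C$ we have $\overline{\{x\}}\subseteq C$. Since $\overline{\{x\}}=B^-(x,1)$,
$$C=\bigcup_{x\in C}B^-(x,1),$$
a union of basis elements. Thus the topology generated by $\mathcal{B}^-$ is exactly the collection of closed sets of $X$, namely $X^{op}$. As a consistency check, $B^-(x,1)=U^{op}_x$ is the minimal open set of $x$ in both topologies.
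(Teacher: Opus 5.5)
Your proof is correct and follows the paper's route. You check that $\mathcal{B}^-$ is a basis just as for $\mathcal{B}^+$, and then use $\overline{\{x\}}=U^{op}_x=B^-(x,1)$ to identify the generated topology with $X^{op}$; your Step 2 (each $B^-(x,n)$ is closed, via the triangle inequality) spells out an inclusion the paper leaves implicit.
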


\begin{remark}\label{3.3.}{\normalfont
	For a finite topological space $X$, the function $\overline{\Psi}:X\times X\longrightarrow \mathbb{N}\cup\{0\}$ defined by $\overline{\Psi}(x,y)=\max\{\Psi(x,y),\Psi(y,x)\}$ is a pseudo-metric.}
	\end{remark}
	
	In the pseudo-metric space  $(X,\overline{\Psi})$, for each $ x\in X$ and $n\in\mathbb{N}$, the open ball $ B(x,n)=\{y\in X|\overline{\Psi}(x,y)<n\}=B^+(x,n)\cap B^-(x,n)$. Let $\mathcal{B}$ be the collection of open balls of pseudo-metric space $(X,\overline{\Psi})$. Let $\overline{\mathcal{T}}$ be the topology generated by $\mathcal{B}$ on $X$. 
	It is clear that the topology induced by $\mathcal{B}$ on $X$ is finer than both $\mathcal{T}$ and $\mathcal{T}^{op}$. Infact, $\overline{\mathcal{T}}$ is the smallest topology on $X$ containing $\mathcal{T}\cup \mathcal{T}^{op}$. Also notice that $(X,\overline{\mathcal{T}})$ is always a disconnected space, if $|X|>1$. \par 

	\begin{remark}{\normalfont
	If a finite topological space $X$ is $T_0$ then by Theorem  \ref{2.6}, $\overline{\Psi}$ is a metric on $X$, and hence the topology induced by $\overline{\Psi}$ is the discrete topology on $X$\par 
	
}
\end{remark}

\subsection{\textbf{Furtherness matrix}}

We can associate a matrix with every finite topological space with the help of furtherness function.

\begin{definition}
Let $X$ be a finite space with $n$ elements $\{a_i|1\leq i\leq n\}$. Then the furtherness matrix associated with  $X$ is an $n\times n$ matrix, where $(i,j)^{th}$ entry of furtherness matrix is $\Psi(a_i,a_j)$, for $a_i,a_j\in X$.

\end{definition}

We denote this matrix by $\Psi(X)$. So, $\Psi(X)_{i,j}=\Psi(a_i,a_j)=|U_{[a_{j}]}\backslash U_{[a_{i}]}|, \forall i,j\in X$. In particular, if $X$ if $T_0$, then $\Psi(X)_{i,j}=|U_{a_{j}}\backslash U_{a_{i}}|, \forall i,j\in X$. \par

In Example \ref{2.2}, the furtherness matrix  $\Psi(X)$ is 
$\bordermatrix{ & a & b & c & d \cr
a &	0 & 1 & 3 & 1\cr
b & 0 & 0 & 2 & 1\cr
c &	0 & 0 & 0 & 0\cr
d &	1 & 2 & 3 & 0 }$.\\
 
Notice that the diagonal of a furtherness matrix is always zero. The	furtherness matrix of a finite space consists of many information about the space.
\begin{itemize}
\item[1.] The furtherness matrix of an indiscrete space of $n$ elements is an $n\times n$ zero matrix.
\item[2.] The furtherness matrix of a discrete space of $n$ elements is an $n\times n$ matrix whose nondiagonal entries are one.
\item[3.] By Theorem \ref{2}, we get that singleton $\{x\}$ is open in a finite space $X$ if and only if the row corresponding to $x$ in $\Psi(X)$ has just one entry as zero.
\item[4.]  The minimal open set $U_x$ containg $x$ in $X$ consists all those $y\in X$ such that the column of $y$ has a zero in the row corresponding to $x$. Moreover, $|U_x|$ is same as the number of zeros in the row corresponding to $x$.
\item[5.]   The smallest closed set $\overline{\{x\}}$ containg $x$ in $X$ consists all those $y\in X$ such that the row of $y$ has a zero in the column corresponding to $x$. Moreover, the cardinality of $\overline{\{x\}}$ is same as the number of zeros in the column corresponding to $x$.
\end{itemize}



We further establish some results using furtherness matrix.

\begin{theorem}
	Let $X$ be a finite space. The furtherness of a point $x$ is zero from a point $y$ if and only if  every entry in the row of $x$ is less than or equal to the corresponding entry in the row of $y$.
\end{theorem}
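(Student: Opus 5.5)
The plan is to use only the triangle inequality (Theorem \ref{2.11.}) together with the fact that $\Psi(y,y)=0$. Throughout, "the furtherness of $x$ from $y$ is zero" means $\Psi(x,y)=0$. In the furtherness matrix $\Psi(X)$, the row of $x$ has entry $\Psi(x,z)$ in the column of $z$, so the statement to prove is
\[
\Psi(x,y)=0 \iff \Psi(x,z)\leq \Psi(y,z) \text{ for all } z\in X .
\]
As a check against Example \ref{2.2}: there $\Psi(b,a)=0$, and indeed the row $(0,0,2,1)$ of $b$ is entrywise at most the row $(0,1,3,1)$ of $a$.

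For the forward direction, assume $\Psi(x,y)=0$ and fix an arbitrary $z\in X$. Theorem \ref{2.11.} gives $\Psi(x,z)\leq \Psi(x,y)+\Psi(y,z)=\Psi(y,z)$. This is exactly the entrywise comparison of the two rows in the column of $z$.

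As an alternative route, I could use Theorem \ref{2}: $\Psi(x,y)=0$ gives $y\in U_x$, hence $U_y\subseteq U_x$. Passing to the quotient, this gives $U_{[y]}\subseteq U_{[x]}$. Then $|U_{[z]}\backslash U_{[x]}|\leq |U_{[z]}\backslash U_{[y]}|$, and the corollary expressing $\Psi$ through minimal open sets of $X/$$\sim$ converts this into the same inequality. The triangle-inequality route is shorter, so I will use that.

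For the converse, assume $\Psi(x,z)\leq\Psi(y,z)$ for every $z$. Specialise to $z=y$ to get $\Psi(x,y)\leq \Psi(y,y)=0$. Since $\Psi$ takes values in $\mathbb{N}\cup\{0\}$, this forces $\Psi(x,y)=0$.

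I expect no real obstacle. The only point needing care is bookkeeping: matching the order of the arguments of $\Psi$ to "rows" of the matrix and to the phrase "furtherness of $x$ from $y$". Getting that orientation wrong would reverse the inequality between the two rows.
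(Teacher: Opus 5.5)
Your proof is correct. The converse, specialising to the column of $y$ and using $\Psi(y,y)=0$, is exactly what the paper does.

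For the forward direction the paper does not cite Theorem~\ref{2.11.}. It argues by contradiction directly from nested sequences. If $\Psi(y,z)<\Psi(x,z)=k$, take a nested sequence $(V_j)_{j\geq 0}$ around $y$ with $z\in V_l$ for some $l<k$, and set $W_j=U_x\cup V_j$. Since $U_y\subseteq U_x$, one has $W_0=U_x$. After discarding repeated terms, $(W_j)$ is a nested sequence around $x$ in which $z$ appears by index $l$, contradicting $\Psi(x,z)=k$.

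This is just the case $q=0$ of the splicing construction $U_j=U_x\cup W_{j-q}$ in the proof of Theorem~\ref{2.11.}. So your route packages the same idea into the already-established lemma and turns the forward direction into a single line. The one real verification is that $U_x\cup V_j$, with repetitions removed, again has no open set strictly between consecutive terms. In both approaches that check is the same; you do it once, inside Theorem~\ref{2.11.}, whereas the paper redoes it inline (and handles the case $W_j=W_{j+1}$ rather briefly). What the paper's inline version buys is self-containment and an explicit witnessing nested sequence around $x$.

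Your alternative via $|U_{[z]}\backslash U_{[x]}|\leq |U_{[z]}\backslash U_{[y]}|$ would also work. It rests on the heavier cardinality formula (Corollary~\ref{2.14} together with Theorem~\ref{3..9} and the corollary following it), so choosing the triangle inequality is the sensible option.
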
  

\begin{proof}
	Let  $\Psi(x,y)=0$ for some $x,y\in X$. We show that $\Psi(x,a)\leq \Psi(y,a),\forall a\in X$. Let $\Psi(x,a)=k$. 
	Now, if $\Psi(y,a)<k$ then there exists a nested sequence $(V_j)_{j\geq 0}$ of open sets  around $y$ such that $a\in V_l$, for some $l<k$. Consider a sequence $(W_j)_{j\geq 0}$ such that $W_j= U_x \cup V_j$. 
	Clearly, $W_j\subseteq W_{j+1},\forall j$. If $W_j\subset W_{j+1},\forall j,$ then $(W_j)_{j\geq 0}$ is a nested sequence of open sets around $x$. As $x\in W_l,\Psi(a,x)<k,$ a contradiction.  If $W_j= W_{j+1}$ for some $j$ then again we get a nested sequence around $x$ such that $\Psi(x,a)$ is much less than $k$, a contraciction. So, $\Psi(y,a)\geq k$. Thus, $\Psi(x,a)\leq \Psi(y,a),\forall a\in X$. Conversely, let $\Psi(x,a)\leq \Psi(y,a),\forall a\in X$, then $\Psi(x,y)\leq \Psi(y,y)=0\implies \Psi(x,y)=0.$
\end{proof}  

\begin{theorem}
	A finite space $X$ is $T_0$ if and only if all the rows of $\Psi(X)$ are distinct.
\end{theorem}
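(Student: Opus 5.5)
The plan is to reduce the statement to Theorem \ref{2.6} together with the theorem just proved: $\Psi(x,y)=0$ holds if and only if the row of $x$ is entrywise at most the row of $y$.

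\emph{Forward direction.} Suppose $X$ is $T_0$ and that the rows of two points $x\neq y$ coincide.
\begin{itemize}
\item[1.] Since the rows are equal, each is entrywise at most the other.
\item[2.] Applying the preceding theorem both ways gives $\Psi(x,y)=0$ and $\Psi(y,x)=0$.
\item[3.] Theorem \ref{2.6} then forces $x=y$, a contradiction. Hence all rows are distinct.
\end{itemize}
Step 2 can be done even more directly. The $(x,y)$ entry of the row of $x$ equals the $(y,y)$ entry of the row of $y$, which is $\Psi(y,y)=0$. So $\Psi(x,y)=0$, and symmetrically $\Psi(y,x)=0$.

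\emph{Converse.} Suppose $X$ is not $T_0$. Then there are distinct points $x,y$ that cannot be separated by an open set. This means $x\in U_y$ and $y\in U_x$, so $U_x=U_y$. By Theorem \ref{2}, $\Psi(x,y)=\Psi(y,x)=0$. Applying the preceding theorem in both directions, the row of $x$ is entrywise at most the row of $y$, and vice versa. Hence the two rows are equal, which contradicts distinctness. Taking the contrapositive, distinct rows imply $T_0$.

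An alternative route to the converse avoids the row-comparison theorem. Theorem \ref{3..9} gives $\Psi_X(x,a)=\Psi_{X/\sim}([x],[a])$, and $[x]=[y]$, so the rows of $x$ and $y$ agree entry by entry.

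The only step needing care is the non-$T_0$ case: one must check that failure of $T_0$ for a pair $x\neq y$ really yields $U_x=U_y$. The point is that in a finite space, any open set containing $x$ contains $U_x$. So if every open set contains both or neither of $x,y$, then $y\in U_x$ and $x\in U_y$, and therefore $U_x=U_y$. Everything else is a direct citation of earlier results.
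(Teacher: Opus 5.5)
Your proof is correct. The forward direction is the paper's argument: your ``more direct'' Step 2 reads $\Psi(x,y)=\Psi(y,y)=0$ and $\Psi(y,x)=\Psi(x,x)=0$ off the equal rows and then invokes Theorem \ref{2.6}, exactly as the paper does.

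The converse takes a different route. You go from $U_x=U_y$ to $\Psi(x,y)=\Psi(y,x)=0$, then apply the row-comparison theorem in both directions (or, alternatively, Theorem \ref{3..9}). The paper instead uses the definition directly. A nested sequence around a point is determined only by its first term $U_0=U_x$, so $U_x=U_y$ means $x$ and $y$ have the same nested sequences. Hence $\Psi(x,z)=\Psi(y,z)$ for every $z$ with no prior theorem needed.

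Your route costs more. The row-comparison theorem's proof requires checking that $W_j=U_x\cup V_j$ still forms a chain of covers, which the paper leaves implicit; the alternative depends on the quotient result instead. In exchange, your converse and forward direction together give a slightly sharper statement: the rows of $x$ and $y$ coincide if and only if $\Psi(x,y)=\Psi(y,x)=0$. So classes of equal rows are exactly the classes of the relation $\sim$ of Proposition \ref{3..8}, and the theorem follows at once. You also justify why failure of $T_0$ gives $U_x=U_y$, a step the paper only asserts.
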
		

\begin{proof}
	Let $X$ be a $T_0$ space. Assume that for some $x\neq y$ in $X$, the rows corresponding to $x$ and $y$ are identical. Thus,  $\Psi(x,z)=\Psi(y,z),\forall ~ z\in X$. As $x\in X$, $\Psi(y,x)=\Psi(x,x)=0$. Similarly, $\Psi(x,y)=\Psi(y,y)=0$. Since $X$ is $T_0$, by Theorem \ref{2.6}, we get $x=y$, a contradiction. So, 
	all the rows of $\Psi(X)$ are distinct.\par 
	If $X$ is not $T_0$ then for some $x\neq y$, we get $U_x=U_y$. 
	So, the furtherness of $x$ from any point $z\in X$ is same as the furtherness of $y$ from $z$. Thus, the row corresponding to $x$ and $y$ are  identical, a contradiction. Hence, our claim. 
\end{proof}		

Similarly, the above result is also true for the columns of furtherness matrix. 
\begin{theorem}
	A finite space $X$ is $T_0$ if and only if all the columns of $\Psi(X)$ are distinct.
\end{theorem}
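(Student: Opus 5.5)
The plan mirrors the proof of the preceding row theorem, with the roles of the two arguments of $\Psi$ swapped; the zero-entries of a column encode the closure of a point rather than its minimal open set.

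\emph{$T_0$ implies distinct columns.} Assume $X$ is $T_0$ and suppose that for some $x\neq y$ the columns of $x$ and $y$ coincide, i.e.\ $\Psi(z,x)=\Psi(z,y)$ for all $z\in X$. Taking $z=y$ gives $\Psi(y,x)=\Psi(y,y)=0$. Taking $z=x$ gives $\Psi(x,y)=\Psi(x,x)=0$. Then Theorem \ref{2.6} forces $x=y$, a contradiction.

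\emph{Not $T_0$ implies two equal columns.} If $X$ is not $T_0$, there exist $x\neq y$ with $x\in U_y$ and $y\in U_x$. By Theorem \ref{2} this means $\Psi(x,y)=\Psi(y,x)=0$, so $x\sim y$ and $[x]=[y]$ in $X/\!\sim$. We want $\Psi(z,x)=\Psi(z,y)$ for every $z$. The cleanest route is Theorem \ref{3..9}:
\[
\Psi_X(z,x)=\Psi_{X/\sim}([z],[x])=\Psi_{X/\sim}([z],[y])=\Psi_X(z,y).
\]
Equivalently, one can use the formula $\Psi(X)_{i,j}=|U_{[a_j]}\backslash U_{[a_i]}|$: it depends on $a_j$ only through $[a_j]$. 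A third option is the triangle inequality (Theorem \ref{2.11.}), which gives
\[
\Psi(z,x)\le \Psi(z,y)+\Psi(y,x)=\Psi(z,y)
\]
and symmetrically $\Psi(z,y)\le\Psi(z,x)$. Either way the columns of $x$ and $y$ agree.

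The main point needing care is this second direction. With Theorem \ref{2.11.} (or Theorem \ref{3..9}) available it is immediate, so I expect no real obstacle; I would present the triangle-inequality argument, since it is self-contained.
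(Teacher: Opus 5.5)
Your proof is correct and follows the paper's route. The paper gives no separate argument here and just says the proof mirrors the row theorem: equal columns with $z=x$ and $z=y$ give $\Psi(x,y)=\Psi(y,x)=0$, contradicting Theorem \ref{2.6}, and in the non-$T_0$ case points with $U_x=U_y$ have identical columns. Your triangle-inequality argument for that last step makes explicit what the paper asserts without comment; even more directly, every open set containing $y$ contains $x\in U_y$, so $\Psi(z,x)\le\Psi(z,y)$.
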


Example \ref{2.1}, the furtherness matrix  $\Psi(X)$ is 
$\bordermatrix{ & 1 & 2 & 3  \cr
	1 &	0 & 0 & 1 \cr
	2 & 0 & 0 & 1 \cr
	3 &	0 & 0 & 0 }$. Here, the rows corresponding to 1 and 2 are the same. So, $X$ is not $T_0$.


We know that finite $T_0$ spaces are posets. Next, using furtherness matrix, we identify maximum and minimum points \cite{Barmak} of a finite $T_0$ space. It is easy to observe the following result.

\begin{theorem}
	Let $X$ be a finite $T_0$ space. Then $x\in X$ is the maximum point of $X$ if and only if the row corresponding to $x$ in $\Psi(X)$ is zero. Also, $x\in X$ is the minimum point of $X$ if and only if the column corresponding to $x$ in $\Psi(X)$ is zero.
\end{theorem}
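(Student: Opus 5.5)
Both parts follow directly from Theorem \ref{2} together with the description of closures given earlier. First I fix the notions, following \cite{Barmak}: in a finite $T_0$ space $X$, regarded as a poset with $x\leq y\iff x\in U_y\iff \Psi(y,x)=0$, a maximum point is a point $x$ with $y\leq x$ for all $y\in X$. Dually, a minimum point is a point $x$ with $x\leq y$ for all $y\in X$. The task is then to translate these order conditions into conditions on rows and columns of $\Psi(X)$.

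For the maximum point, $x$ is maximum iff $y\leq x$ for every $y$, i.e. iff $\Psi(x,y)=0$ for all $y\in X$. Equivalently, by Theorem \ref{2}, $U_x=\{y\mid \Psi(x,y)=0\}=X$. Since the $(i,j)$ entry of $\Psi(X)$ is $\Psi(a_i,a_j)$, the entries $\Psi(x,y)$ as $y$ varies make up exactly the row of $x$. So $x$ is a maximum point iff the row of $x$ is zero.

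For the minimum point, $x$ is minimum iff $x\leq y$ for every $y$, i.e. iff $\Psi(y,x)=0$ for all $y\in X$. The entries $\Psi(y,x)$ as $y$ varies make up the column of $x$. Equivalently, $\overline{\{x\}}=U^{op}_x=B^-(x,1)=X$, so $x$ lies in every nonempty open set, which is the topological description of a minimum. Hence $x$ is a minimum point iff the column of $x$ is zero.

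There is no real obstacle here. The only care needed is to keep the row/column orientation straight: row $x$ lists $\Psi(x,\cdot)$ and column $x$ lists $\Psi(\cdot,x)$. It is also worth noting that the $T_0$ hypothesis is needed only so that $\leq$ is a partial order, which makes a maximum or minimum point unique; the equivalences themselves hold without it.
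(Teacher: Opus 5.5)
Your proposal is correct and takes the same approach the paper implicitly intends (the paper gives no written proof, only calling the result easy to observe): via Theorem~\ref{2}, a maximum point means $\Psi(x,y)=0$ for all $y$, which is the row of $x$, and a minimum point means $\Psi(y,x)=0$ for all $y$, which is the column of $x$. Your side remark is also right: the equivalences hold without $T_0$, and $T_0$ is needed only for uniqueness of the maximum or minimum point.
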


\begin{theorem}
		A finite space is contractible if its furtherness matrix has a zero row or a zero  column.
\end{theorem}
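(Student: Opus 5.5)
The plan is to reduce the statement to the contractibility criterion already recorded earlier, namely the proposition (restating Stong) that a finite space $X$ is contractible if there is $x\in X$ with $\Psi(x,y)=0$ for all $y\in X$, or with $\Psi(y,x)=0$ for all $y\in X$. All that is left is to read a zero row or a zero column of $\Psi(X)$ in these terms.

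First, suppose the row of $\Psi(X)$ corresponding to some $a_i$ is zero. By definition of the furtherness matrix, $\Psi(X)_{i,j}=\Psi(a_i,a_j)$, so $\Psi(a_i,y)=0$ for every $y\in X$. By Theorem \ref{2} this gives $U_{a_i}=\{y\in X\mid \Psi(a_i,y)=0\}=X$, so $a_i$ is a maximum for the preorder $\leq$. The first alternative of the earlier proposition applies, and $X$ is contractible. For completeness one can also give the homotopy directly. The constant map $c_{a_i}$ and $1_X$ are continuous and satisfy $\Psi(a_i,x)=0$ for all $x$. By the earlier homotopy proposition (with $f=c_{a_i}$, $g=1_X$), they are homotopic.

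Second, suppose the column corresponding to some $a_j$ is zero. Then $\Psi(y,a_j)=0$ for all $y\in X$, so $a_j\in U_y$ for all $y$. Equivalently, $a_j$ is a minimum and $\overline{\{a_j\}}=B^-(a_j,1)=X$. The second alternative of the proposition applies. Directly, $\Psi(1_X(x),c_{a_j}(x))=0$ for all $x$, so $1_X\simeq c_{a_j}$.

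There is no real obstacle. The only care needed is the convention on indices: rows index the first argument of $\Psi$ and columns the second. We must also check that the direction of the inequality $\Psi(f(x),g(x))=0$ in the homotopy proposition matches each case, which it does as written above. Non-$T_0$ spaces need no separate treatment, since both cited propositions hold for arbitrary finite spaces.
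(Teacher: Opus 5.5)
Your proposal is correct and matches what the paper intends. The paper states this theorem without proof, as an immediate rereading of the earlier proposition (a finite space is contractible if some $x$ has $\Psi(x,y)=0$ for all $y$ or $\Psi(y,x)=0$ for all $y$); a zero row of $\Psi(X)$ gives the first condition and a zero column the second, exactly as you argue, and your optional direct homotopies $c_{a_i}\simeq 1_X$ and $1_X\simeq c_{a_j}$ are correctly oriented with respect to the homotopy proposition.
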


\begin{theorem}
	Let $X$ be a finite space and $x\in X$. Then the furtherness of $y\in X$ from $x$ can not be more than the  number of zeroes in the row corresponding to $x$ in the furtherness matrix.
\end{theorem}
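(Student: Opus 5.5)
We read ``the furtherness of $y$ from $x$'' as $\Psi(y,x)$, following the convention of the definition, where $\Psi(x,y)$ is the furtherness of $x$ from $y$. Under this reading the claim is $\Psi(y,x)\leq |U_x|$. This is the only reading consistent with the paper's example: in Example~\ref{2.2}, $\Psi(a,c)=3$, yet the row of $a$ has a single zero, so the bound cannot be $\Psi(x,y)\le|U_x|$. By item 4 in the list of properties of the furtherness matrix, together with Theorem~\ref{2}, the number of zeros in the row of $x$ is exactly $|U_x|$. So it suffices to show $\Psi(y,x)\le |U_x\setminus U_y|$, which is clearly at most $|U_x|$.

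The plan is to build a nested sequence around $y$ directly. By the observation preceding Theorem~\ref{2.9.}, since $U_y\subseteq U_{x,y}=U_y\cup U_x$ is open, there is a nested sequence $(U_j)_{j\ge 0}$ around $y$ with $U_0=U_y$ and $U_m=U_{x,y}$ for some $m$. Because $x\in U_m$, the definition of $\Psi$ gives $\Psi(y,x)\le m$.

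It remains to bound $m$. Each step is a proper inclusion $U_{j-1}\subset U_j$, so every step adds at least one new point. All points added between $U_0=U_y$ and $U_m=U_y\cup U_x$ lie in $U_x\setminus U_y$, and distinct steps add disjoint sets of new points. Hence $m\le |U_x\setminus U_y|\le |U_x|$, which equals the number of zeros in the row of $x$.

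Alternatively, and more in the paper's style, one can cite the corollary following Theorem~\ref{3..9}, which gives $\Psi(y,x)=|U_{[x]}\setminus U_{[y]}|\le |U_{[x]}|$. One then notes that $|U_{[x]}|\le |U_x|$, since by Theorem~\ref{2} the quotient map sends $U_x$ onto $U_{[x]}=\{[z]\mid \Psi(x,z)=0\}$.

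The only point that needs care is the existence of a nested chain from $U_y$ all the way up to exactly $U_{x,y}$. This holds because between any two open sets $A\subseteq B$ of a finite space one can always refine to a maximal chain of open sets. Everything else is counting.
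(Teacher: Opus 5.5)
Your proof is correct and follows essentially the same route as the paper. Both arguments build a nested sequence around $y$ that reaches $U_y\cup U_x$ and bound the index at which $x$ first appears by $|U_x|$, the number of zeros in the row of $x$. The paper adjoins the distinct minimal sets $U_a$ with $a\in U_x$ one at a time; your refinement to a maximal chain followed by a count of new points is the cleaner justification, since it guarantees that each step is a proper inclusion with no open set in between, which the paper does not check. It also gives the sharper bound $|U_x\setminus U_y|$.
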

\begin{proof}
	Suppose that the row corresponding to $x$ in the furtherness matrix has $k$ zeroes. We show that $\Psi(y,x)\leq k,$ for all $y\in X$. Let  $a_1,a_2,...,a_k$ be $k$ elements in $X$ such that $\Psi(x,a_i)=0$. Therefore, $U_{a_i}\subseteq U_x$ for $1\leq i\leq k$. Let $A$ be the collection of all the $a_i$ such that there $U_{a_i}$'s are distinct. Clearly, $|A|=p\leq k$. For $y\in X$, consider a nested sequence $(U_j)_{j\geq0}$ of open sets around $y$ in which $U_0=U_y$ and $U_j=U_{j-1}\cup U_{a_j}$ for $1\leq j\leq p$. As $U_x=\{a\in X|\Psi(x,a)=0\},$ we get $x\in U_p$, and $x\notin U_j$, for $j<p$.
	Thus, $\Psi(y,x)\leq p \leq k$.
\end{proof}

\begin{corollary}
	The furtherness between two points of a finite space $X$ can not be more than the maximum number of zeroes in any row of its furtherness matrix.
\end{corollary}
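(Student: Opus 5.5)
The plan is to derive the statement directly from the preceding theorem, which bounds each furtherness by the number of zeroes in a particular row of $\Psi(X)$; no new construction of nested sequences should be needed. Write $X=\{a_1,\dots,a_n\}$ and let $z_i$ denote the number of zeroes in the row of $\Psi(X)$ corresponding to $a_i$, i.e. $z_i=|\{a\in X \mid \Psi(a_i,a)=0\}|=|U_{a_i}|$ by Theorem \ref{2}. Set $M=\max_{1\leq i\leq n} z_i$; this maximum exists because $X$ is finite.

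Next, take arbitrary $x,y\in X$ and aim to show $\Psi(y,x)\leq M$. The preceding theorem, applied with the roles stated there (the point $x$ whose row is examined and an arbitrary $y$), gives $\Psi(y,x)\leq z_x$, where $z_x$ is the number of zeroes in the row of $x$. Since $z_x\leq M$ by the definition of $M$, we get $\Psi(y,x)\leq M$. Because $x$ and $y$ were arbitrary, this covers every ordered pair of points, and hence every entry of $\Psi(X)$.

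The only step needing care is reading the preceding theorem with the correct orientation. It bounds $\Psi(y,x)$, the furtherness of $y$ from $x$, by the zeroes in the row of the \emph{second} argument $x$, not the first. For the corollary this does not matter: taking a maximum over all rows removes the dependence on which row is used. I would also note, as a sanity check, that the bound is attained in simple cases. In the discrete space every row has exactly one zero and all off-diagonal furtherness values equal $1$.
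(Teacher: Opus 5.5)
Your proposal is correct and takes the same route as the paper: the corollary is stated there without a separate proof, as an immediate consequence of the preceding theorem ($\Psi(y,x)$ is at most the number of zeroes in the row of $x$, which is $|U_x|$), maximized over all rows, exactly as you do. Your reading of the orientation (the bound uses the row of the second argument) agrees with the paper's proof of that theorem, and your discrete-space example correctly shows the bound is attained.
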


	\section{\textbf{Center and Radius of a subset of finite topologiocal space}}
	
In this section, first we define the furtherness of a point  from any subset of a finite space $X$. Moreover, the furtherness of a subset from any  subset of $X$ can also be determined. 
	
	\begin{definition}
		Let $B$ be a nonempty subset of a fnite space $X.$ For $a\in X$,  the furtherness of $a$ from $B$ is defined by  $\Psi(a,B)=\Psi_{a}(B)=\min\limits_{b\in B}\Psi_{a}(b)=\min\limits_{b\in B}\Psi(a,b)$. For a nonempty subset $A$ of $X$, the furtherness of $A$ from $B$ is defined by  $\Psi(A,B)=\Psi_{A}(B)=\min\limits_{a\in A}\Psi(a,B)=\min\limits_{a\in A}\{\min\limits_{b\in B}\Psi(a,b)\}$.\par 
		{\normalfont We have $\Psi(a,b)$ is the least $k$ such that $b\in U_{k}$ for a finite nested sequence  $(U_j)_{j\geq0}$ of open sets around $a$ and $\Psi(a,B)$ is the  least $k$ such that $b\in U_{k}$ for some $b\in B$. Thus $\Psi(a,B)$ is the least number  $k$ such that $B\cap   U_{k}\neq\emptyset$. So, if $B= \emptyset$ then, $\Psi(A,B)=\infty$. \par
			If $A= \emptyset$ then, $\Psi(A,B)=\infty$, because if  $\Psi(A,B)=k\in \mathbb{N}\cup\{0\}$, then $\Psi(a,b)=k,$ for some $a\in A$ and $b\in B$, a contradiction as there does  ot exist any $a\in A$. So, $\Psi(A,B)=\infty$.
		}

	\end{definition}
	

	In Example \ref{2.2}, the furtherness of $a$ from $\{b,c\}$  is $\Psi(a,\{b,c\})=\min\{\Psi_{a}(b),\Psi_{a}(c)\}\\ =\min\{1,3\}=1,$ whereas the  furtherness of $\{b,c\}$ from  $a$ is $\Psi(\{b,c\},a)=\min\{\Psi_{b}(a),\\
	\Psi_{c}(a)\} =\min\{0,0\}=0.$\par 
	
	Next, if $A$ and $B$ are subsets of a finite space $X$ such that $A\cap B\neq \emptyset$, then $\Psi(A,B)=\Psi(B,A)=0$. But its converse is no true. As in Example \ref{2.2}, $\Psi(\{b,c\},a)=0$ but $\{b,c\}\cap \{a\}= \emptyset$.
	
	
	Notice that, if  $X$ is a finite space and $A\subseteq B$ are subsets of $X$, then $\Psi(x,A)\geq \Psi(x,B)$ and $\Psi(A,x)\geq \Psi(B,x), \forall x\in X$.
	
	\begin{theorem}
	Let $A$ be a subset of finite topological space $X$. Then $\Psi(a,A)=\Psi(a,\overline{A}),\forall a\in X$.
	\end{theorem}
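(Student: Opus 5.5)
Since $A\subseteq\overline{A}$, the remark just above already gives $\Psi(a,A)\geq\Psi(a,\overline{A})$. So the work is the reverse inequality $\Psi(a,A)\leq\Psi(a,\overline{A})$, and the plan is to find, for each $b\in\overline{A}$, a point of $A$ at least as close to $a$ as $b$ is.

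First I will use the observation recorded before the backward-ball theorem: the closure of $A$ is $\{y\in X\mid\Psi(y,A)=0\}$. Thus for every $b\in\overline{A}$ there is some $c\in A$ with $\Psi(b,c)=0$, that is, $c\in U_b$. Since $U_b=\{y\in X\mid\Psi(b,y)=0\}$ by Theorem \ref{2}, this is the same thing. If one prefers not to rely on that observation, it can be checked directly: $b\in\overline{A}$ means every open set containing $b$ meets $A$, and in particular $U_b\cap A\neq\emptyset$.

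Next I apply the triangle inequality, Theorem \ref{2.11.}, to this $c$:
\[
\Psi(a,c)\leq\Psi(a,b)+\Psi(b,c)=\Psi(a,b).
\]
Because $c\in A$, this gives $\Psi(a,A)\leq\Psi(a,c)\leq\Psi(a,b)$. Taking the minimum over $b\in\overline{A}$ then yields $\Psi(a,A)\leq\Psi(a,\overline{A})$.

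Combining the two inequalities gives equality. The only genuinely substantive step is the description of $\overline{A}$ in terms of $\Psi$, and that is elementary from minimal open sets. The case $A=\emptyset$ is trivial, since then $\overline{A}=\emptyset$ and both sides equal $\infty$.
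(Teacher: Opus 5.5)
Your proof is correct and is essentially the paper's argument: both take a point $c\in U_b\cap A$ for $b\in\overline{A}$ and apply the triangle inequality $\Psi(a,c)\leq\Psi(a,b)+\Psi(b,c)=\Psi(a,b)$. The only difference is cosmetic: the paper applies this only to a minimizer $b'$ of $\Psi(a,\cdot)$ on $\overline{A}$, splitting on whether $b'\in A$ or $b'\in\partial_X(A)$, whereas you apply it to every $b\in\overline{A}$ and so avoid the case split.
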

	\begin{proof}
	As $A\subseteq \overline{A}$,  $\Psi(a,A)\geq \Psi(a,\overline{A}),\forall a\in X$. By definition of furtherness $\Psi(a,\overline{A})=\min\limits_{b\in\overline{A}}\Psi(a,b)=\Psi(a,b')$ for some $b'\in\overline{A}$. If $b'\in A$, then $\Psi(a,A)=\Psi(a,\overline{A})$. If $b'\notin A$, then $b'\in \partial_{X}(A)$. This means $U_{b'}\cap A\neq\emptyset\implies \Psi(b',a')=0,$ for some $a'\in A \implies \Psi(b',A)=0$. Now as furtherness satisfies triangle inequality, we get $\Psi(a,A)\leq \Psi(a,a')\leq \Psi(a,b')+\Psi(b',a')= \Psi(a,\overline{A}).$ Hence, $\Psi(a,A)=\Psi(a,\overline{A}),\forall a\in X$.
	\end{proof}

	
	
	\begin{remark}{\normalfont
			If $U_x=U_y$, 
			then $\Psi(x,z)=\Psi(y,z)$ and $\Psi(z,x)=\Psi(z,y),\forall x,y,z\in X.$ Moreover, $\Psi(x,A)=\Psi(y,A)$ and $\Psi(A,x)=\Psi(A,y),\forall A\subseteq X.$}
	\end{remark}

	\begin{theorem}
		Let $X$ be a finite space and $A, B$ are subsets of $X$ such that $\Psi(A,B)=0$, then 
		there does not exist any disjoint open sets $U$ and $V$ such that $A\subseteq U$ and $B\subseteq V$.
	\end{theorem}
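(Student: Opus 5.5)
The plan is to unpack $\Psi(A,B)=0$ into a statement about a single pair of points, and then use minimal open sets to defeat any proposed separation. By the definition of furtherness between subsets, $\Psi(A,B)=\min_{a\in A}\min_{b\in B}\Psi(a,b)$. Hence $\Psi(A,B)=0$ (which in particular forces $A,B\neq\emptyset$, since otherwise the value is $\infty$) gives points $a\in A$ and $b\in B$ with $\Psi(a,b)=0$. By Theorem \ref{2}, this means $b\in U_a$.

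Next we argue by contradiction. Suppose $U$ and $V$ are open sets with $A\subseteq U$, $B\subseteq V$ and $U\cap V=\emptyset$. Since $a\in A\subseteq U$ and $U$ is open, minimality of $U_a$ (the intersection of all open sets containing $a$) gives $U_a\subseteq U$. Therefore $b\in U_a\subseteq U$. We also have $b\in B\subseteq V$, so $b\in U\cap V$, contradicting disjointness. Hence no such pair of disjoint open sets exists.

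There is essentially no hard step. The only points needing care are the translation of $\Psi(a,b)=0$ into $b\in U_a$, which is exactly Theorem \ref{2} (or directly: $b\in U_0=U_a$ for every nested sequence around $a$), and noting that the minimum over the finite sets $A$ and $B$ is actually attained by some pair $(a,b)$. Both are immediate from the definitions given earlier.
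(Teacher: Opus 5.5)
Your proof is correct and follows the paper's own argument: extract $a\in A$, $b\in B$ with $\Psi(a,b)=0$, conclude $b\in U_a$, and observe that every open set containing $a$ (in particular $U\supseteq A$) then contains $b\in B\subseteq V$, which contradicts $U\cap V=\emptyset$. Your version only spells out the final contradiction and the nonemptiness remark more explicitly than the paper does.
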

	\begin{proof}
		As $\Psi(A,B)=0$, there exists $a\in A$ and $b\in B$ such that $\Psi(a,b)=0$. Thus, $b\in U_a$. Which means every open neighbourhood of $a$ contains $b$. So, there does not exist two disjoint open sers $U$ and $V$ such that $A\subseteq U$ and $B\subseteq V$.
	\end{proof}

	\subsection{Center and Radius of a subset of finite space} 
In a metric space $X$, for a subset $A$ of $X$, the center $Cent_{X}(A)$ of $A$ consists of all those points of $A$, which are at the maximum distance from the boundary of $A$, see \cite{Badra}. Now, we have a distance like notion of furtherness for finite spaces. Using that,  we define the center and radius of a subset $A$ of finite  space $ X$.

\begin{definition}[Center of a subset] 
	The center of subset $A$ of a finite topological space $X$ is the set
	$ \{a \in A | \Psi(a,\partial_{X}(A)) \geq \Psi(b,\partial_{X}(A)), \forall\, b \in A\}$, where $\partial_{X}(A)$ is the boundary of $A$ in $X$. We denote the center of $A$ in $X$ by $Cent_{X}(A).$

	
\end{definition}

Thus the center of A is the set of all those elements of $A$ which are at the maximum furtherness from the boundary of $A$. 

\begin{definition}[Radius of a subset]
	The radius of subset $A$ of a finite topological space $X$ is the furtherness of the center of $A$ from the boundary of $A$.  We denote the radius of A in X by $rad_{X}(A).$ 
\end{definition}

It is clear that for every point in $Cent_{X}(A)$ has the same furtherness from $\partial_{X}(A)$. We have $rad_{X}(A)= \Psi(Cent_{X}(A),\partial_{X}(A)) = \Psi(a, \partial_{X}(A)) , \forall a\in Cent_{X}(A)$.\par 
Notice that for a nonempty subset $A$ of $X$, its center is always nonempty. For any clopen subset $A$ of a finite space $X$, $Cent_{X}(A)= A$ and $rad_{{X}}(A) = \infty$.
Note that the radius of a subset of finite space is infinite if and only if it is clopen. \vspace{3mm}


In Example \ref{2.1}, for $A=\{2,3\}\subseteq X$, we have $\partial_{X}(A)=\{1,2,3\}$. So, $Cent_{X}(A)=\{2,3\}$ and $rad_{X}(A)=0$.\par 
In Example \ref{2.2},  for $A=\{a,c\}\subseteq X$, we have $\partial_{X}(A)=\{b,c\}$. So, $Cent_{X}(A)=\{a\}$ and $rad_{X}(A)=1$.\par 

Notice that if $A\subseteq\partial_{X}(A),$ then 
$\Psi(a,\partial_{X}(A))=0,\forall a\in A$. 
So,  we have the following result.

\begin{theorem}
Let $A$ be a nonempty subset of a finite space $X$ such that $A^\circ=\emptyset$, then $Cent_{X}(A)=A$ and $rad_{X}(A)=0$.
\end{theorem}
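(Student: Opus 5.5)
The plan is to show that the hypothesis $A^\circ=\emptyset$ forces $A\subseteq\partial_{X}(A)$, and then apply the observation made just before the statement. Once every point of $A$ is at furtherness $0$ from the boundary, the definitions of center and radius give the claim at once.

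\textbf{Step 1: $A\subseteq\partial_X(A)$.} I will use the standard identity $\partial_{X}(A)=\overline{A}\setminus A^\circ$. Since $A^\circ=\emptyset$, this gives $\partial_{X}(A)=\overline{A}\supseteq A$. If one prefers to stay within the furtherness language, the same inclusion can be checked pointwise. For $a\in A$ we have $\Psi(a,a)=0$, so $a\in\overline{A}=\{y\in X\mid\Psi(y,A)=0\}$. Also $a\notin A^\circ=\emptyset$, so $a$ lies in the boundary.

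\textbf{Step 2: furtherness to the boundary is zero.} Fix $a\in A$. By Step 1, $a\in\partial_X(A)$, and $\partial_X(A)$ is nonempty because $A$ is nonempty. Then
\[\Psi(a,\partial_X(A))=\min_{b\in\partial_X(A)}\Psi(a,b)\le\Psi(a,a)=0.\]
Since $\Psi$ takes values in $\mathbb{N}\cup\{0\}$, this forces $\Psi(a,\partial_X(A))=0$ for every $a\in A$.

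\textbf{Step 3: conclude.} All points of $A$ have the same value $0$ of $\Psi(\cdot,\partial_X(A))$. So for each $a\in A$ the defining inequality $\Psi(a,\partial_X(A))\ge\Psi(b,\partial_X(A))$ holds for every $b\in A$. Hence $Cent_X(A)=A$. The radius is then $rad_X(A)=\Psi(a,\partial_X(A))=0$ for any $a\in Cent_X(A)=A$, and this set is nonempty because $A$ is.

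There is no real obstacle here. The only point needing care is Step 1, namely that the boundary really equals the closure when the interior is empty. In particular, this guarantees that $\partial_X(A)\neq\emptyset$, so we never fall into the convention $\Psi(\cdot,\emptyset)=\infty$.
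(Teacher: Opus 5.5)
Your proof is correct and follows essentially the same route as the paper: the paper notes only that $A\subseteq\partial_X(A)$ forces $\Psi(a,\partial_X(A))=0$ for every $a\in A$, and that this inclusion follows from $A^\circ=\emptyset$, which yields $Cent_X(A)=A$ and $rad_X(A)=0$. Your explicit check that $\partial_X(A)\neq\emptyset$ fills in a detail the paper leaves implicit, and it is what rules out the convention $\Psi(\cdot,\emptyset)=\infty$.
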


If interior of $A$ is nonempty then $\Psi_X(a,\partial_X(A))>0$ for all $a\in A^\circ$, and we have

\begin{theorem}
Let $A$ be a subset of a finite  space $X$ such that $A^\circ\neq\emptyset$, then $Cent_{X}(A)\subseteq A^\circ$ and $rad_{X}(A)>0$.
\end{theorem}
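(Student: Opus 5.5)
The idea is to compare the value $\Psi(a,\partial_X(A))$ for points $a\in A^\circ$ with its value for points $a\in A\setminus A^\circ$. The first set of values will be strictly positive and the second identically zero. The center is defined as the set of points of $A$ at maximal furtherness from the boundary, and $A^\circ\neq\emptyset$ guarantees a point with positive value. So the maximum is positive and can only be attained inside $A^\circ$. Throughout I use $\partial_X(A)=\overline{A}\setminus A^\circ$.

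\emph{Step 1 (points of the interior are at positive furtherness).} Let $a\in A^\circ$. Since $A^\circ$ is an open set containing $a$, the minimal open set satisfies $U_a\subseteq A^\circ$. Because $\partial_X(A)\cap A^\circ=\emptyset$, we get $U_a\cap\partial_X(A)=\emptyset$. By Theorem \ref{2}, $U_a=\{y\in X\mid \Psi(a,y)=0\}$, so $\Psi(a,b)\neq 0$, hence $\Psi(a,b)\geq 1$, for every $b\in\partial_X(A)$. Taking the minimum over $b$ gives $\Psi(a,\partial_X(A))\geq 1$. If $\partial_X(A)=\emptyset$, the value is $\infty$ by the convention in the definition, which is still positive.

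\emph{Step 2 (non-interior points of $A$ are at furtherness zero).} Let $a\in A\setminus A^\circ$. Then $a\in A\subseteq\overline{A}$ and $a\notin A^\circ$, so $a\in\partial_X(A)$. Since $\Psi(a,a)=0$, it follows that $\Psi(a,\partial_X(A))=0$.

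\emph{Step 3 (conclusion).} Pick any $a_0\in A^\circ$, which exists by hypothesis. By Step 1, every $c\in Cent_X(A)$ satisfies $\Psi(c,\partial_X(A))\geq\Psi(a_0,\partial_X(A))>0$. By Step 2, no point of $A\setminus A^\circ$ can satisfy this, so $Cent_X(A)\subseteq A^\circ$. Then $rad_X(A)=\Psi(c,\partial_X(A))$ for any $c\in Cent_X(A)$, and this is $>0$ by Step 1; it is possibly $\infty$ in the clopen case, consistent with the earlier remark.

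I do not expect a real obstacle. The only point needing care is the edge case $\partial_X(A)=\emptyset$ in Step 1, which is handled by the convention $\Psi(a,\emptyset)=\infty$.
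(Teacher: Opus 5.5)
Your proof is correct and follows the paper's own brief justification. Points of $A^\circ$ have $U_a$ disjoint from $\partial_X(A)$ and hence positive furtherness from it, while points of $A\setminus A^\circ$ lie in $\partial_X(A)$ and so have furtherness zero; this forces the maximum to be positive and attained only inside $A^\circ$, and you merely make explicit the use of Theorem~\ref{2} and the clopen case $\partial_X(A)=\emptyset$, which the paper leaves implicit.
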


For a nonempty subset $A$ of a finite  space $X$, we get  $A^\circ=\emptyset \iff rad_{X}(A)=0$.


\begin{corollary}
	Radius of every open subset of a finite space is always positive.
\end{corollary}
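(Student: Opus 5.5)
The corollary follows from the preceding theorem together with the characterization $A^\circ=\emptyset \iff rad_X(A)=0$ for nonempty $A$. I will treat three cases: $A$ nonempty and not closed, $A$ clopen, and $A=\emptyset$.

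\textbf{Main case: $A$ nonempty, open and not closed.} Since $A$ is open, $A^\circ=A\neq\emptyset$. The preceding theorem then gives $Cent_X(A)\subseteq A^\circ$ and $rad_X(A)>0$.

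To check this directly rather than rely only on the cited statement, take $a\in A$ and $b\in\partial_X(A)$. Because $A$ is open, $\partial_X(A)=\overline{A}\setminus A$, so $b\notin A$. By Theorem \ref{2}, $U_a=\{y\in X\mid \Psi(a,y)=0\}$, and $U_a\subseteq A$ since $A$ is open. Hence $b\notin U_a$, so $\Psi(a,b)\geq 1$.

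Taking the minimum over $b\in\partial_X(A)$ gives $\Psi(a,\partial_X(A))\geq 1$ for every $a\in A$. In particular this holds for any $a\in Cent_X(A)$, so $rad_X(A)\geq 1>0$.

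\textbf{Clopen case.} If $A$ is also closed, then $\partial_X(A)=\emptyset$. The paper's convention gives $\Psi(a,\emptyset)=\infty$, and it was already noted that clopen sets have $rad_X(A)=\infty$. This is positive in the extended sense.

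\textbf{Empty set.} This is the one genuine obstacle. The empty set is open, its center is empty, and $\Psi(\emptyset,\emptyset)=\infty$ by convention. So its radius is $\infty$, which is again positive under the paper's conventions. I will either state this explicitly or read the corollary as concerning nonempty open subsets.
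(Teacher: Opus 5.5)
Your proof is correct and takes the paper's route: the corollary follows from the preceding theorem because an open $A$ has $A^\circ=A\neq\emptyset$. Your direct check, that $U_a\subseteq A$ forces $\Psi(a,b)\geq 1$ for every $b\in\partial_X(A)=\overline{A}\setminus A$, supplies the justification the paper asserts without proof, and your clopen and empty-set cases agree with the paper's convention $rad_X(A)=\infty$ for clopen sets.
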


\begin{remark}
	{\normalfont
		Note that the furtherness map $\Psi$ of a finite space $X$ is not always a continuous map, where $\Psi(X)$ has the subspace topology induced from Euclidean line $\mathbb{R}$. So, unlike metric spaces, $Cent_{X}(A)$ may not be a closed subset of $A$, for $A\subseteq X$. In Example \ref{2.2}, for $A=\{a,c\}\subseteq X,$ we get $Cent_{X}(A)=\{a\},$ which is not closed in $A$.}
\end{remark}

Next, we have a relationship between
the radii of $A^\circ$ and $\overline{A}$ with the radius of A. As the  boundaries of $A^\circ$ and $\overline{A}$ are contained in the boundary of $A$, so 
we have the following result.

\begin{theorem}
	Let $X$ be a finite  space and $A \subseteq X$.
	Then $rad_X(A) \leq rad_X(A^\circ)$ and $rad_X(A) \leq  rad_X(\overline{A})$.
\end{theorem}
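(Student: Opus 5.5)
The plan is to reduce both inequalities to two facts: monotonicity of $\Psi(a,\cdot)$ under inclusion of the target set, and the inclusions $\partial_X(A^\circ)\subseteq\partial_X(A)$ and $\partial_X(\overline{A})\subseteq\partial_X(A)$. Throughout I use the description of the radius as a maximum,
$$rad_X(A)=\max_{a\in A}\Psi(a,\partial_X(A)),$$
which follows directly from the definitions of $Cent_X(A)$ and $rad_X(A)$. I also use the convention $\Psi(a,\emptyset)=\infty$ from the definition of furtherness of a point from a subset.

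\textbf{Step 1: the boundary inclusions.} I write $\partial_X(B)=\overline{B}\setminus B^\circ$.
\begin{itemize}
\item For $A^\circ$: we have $\overline{A^\circ}\subseteq\overline{A}$ and $(A^\circ)^\circ=A^\circ$. Hence $\partial_X(A^\circ)=\overline{A^\circ}\setminus A^\circ\subseteq\overline{A}\setminus A^\circ=\partial_X(A)$.
\item For $\overline{A}$: we have $\overline{\overline{A}}=\overline{A}$ and $(\overline{A})^\circ\supseteq A^\circ$. Hence $\partial_X(\overline{A})=\overline{A}\setminus(\overline{A})^\circ\subseteq\overline{A}\setminus A^\circ=\partial_X(A)$.
\end{itemize}
By the remark that $B\subseteq C$ implies $\Psi(x,B)\geq\Psi(x,C)$, it follows that for every $x\in X$,
$$\Psi(x,\partial_X(A))\leq\Psi(x,\partial_X(A^\circ))\quad\text{and}\quad\Psi(x,\partial_X(A))\leq\Psi(x,\partial_X(\overline{A})).$$
If a boundary is empty, the corresponding right-hand side is $\infty$ and the inequality still holds.

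\textbf{Step 2: the closure.} Pick $c\in Cent_X(A)$, so that $rad_X(A)=\Psi(c,\partial_X(A))$. Since $c\in A\subseteq\overline{A}$, Step 1 gives
$$rad_X(A)\leq\Psi(c,\partial_X(\overline{A}))\leq\max_{b\in\overline{A}}\Psi(b,\partial_X(\overline{A}))=rad_X(\overline{A}).$$

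\textbf{Step 3: the interior.} This is the delicate case, because the centre of $A$ must lie in $A^\circ$ for the same comparison to work. I split into two cases.
\begin{itemize}
\item If $A^\circ=\emptyset$, the preceding theorem gives $rad_X(A)=0$, and the inequality is trivial. Here $A^\circ=\emptyset$ is clopen, so $rad_X(A^\circ)=\infty$ by the paper's convention.
\item If $A^\circ\neq\emptyset$, the preceding theorem gives $Cent_X(A)\subseteq A^\circ$. So I may choose $c\in Cent_X(A)\cap A^\circ$ and repeat Step 2:
$$rad_X(A)=\Psi(c,\partial_X(A))\leq\Psi(c,\partial_X(A^\circ))\leq rad_X(A^\circ).$$
\end{itemize}

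\textbf{Remaining edge cases.} If $A=\emptyset$, then $A$ is clopen and all three radii are $\infty$. If $A$ is clopen, then so are $A^\circ=\overline{A}=A$, and all radii again equal $\infty$. Neither case causes any difficulty.

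The main obstacle is the need for the centre of $A$ to lie in $A^\circ$, which Step 3 handles; everything else is set-theoretic bookkeeping with boundaries.
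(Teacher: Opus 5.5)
Your proof is correct and takes the paper's route. The paper justifies this theorem only by the remark that $\partial_X(A^\circ)$ and $\partial_X(\overline{A})$ are contained in $\partial_X(A)$, implicitly combined with the monotonicity of $\Psi(a,\cdot)$. Your Step 3 adds the detail the paper leaves unstated: the comparison for $A^\circ$ needs a center of $A$ lying in $A^\circ$ (equivalently, points of $A\setminus A^\circ$ lie in $\partial_X(A)$ and so contribute furtherness $0$).
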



Notice that if $X$ is a finite space and $A \subseteq Y \subseteq X$, then $rad_X(A) \leq  rad_Y (A).$\\

	By Theorem \ref{4.1.}, we have the notion of furtherness for the product of finite spaces, using that one can find the center and radius of subsets of the product of finite spaces.

\subsection{\textbf{Center and radius of union of subsets of a finite space}}

In this section, we derive $Cent_{X}(A \cup B)$ and  $rad_{X}(A \cup B)$ for nonclopen subsets $A$ and $B$ within a finite space $X$. Given nonclopen subsets $A$ and $B$ of a finite space $X$, let\par
\hspace{2mm} $\Tilde{A}=\{a\in Cent_{X}(A)|\Psi(a, \partial_{X}(B))<rad_{X}(A)\}$, and\par
\hspace{2mm} $\Tilde{B}=\{b\in Cent_{X}(B)|\Psi(b,\partial_{X}(A))<rad_{X}(B)\}.$\\
Using these notations, we determine $Cent_{X}(A \cup B)$ and  $rad_{X}(A \cup B)$ for non-clopen subsets A and B in a finite  space X.

\begin{theorem}\label{2.25}
Let A and B be  nonclopen separated subsets of a finite space $X.$ Then, $rad_{X}(A\cup B)\leq \max\{rad_{X}(A),rad_{X}(B)\}.$  Moreover,
\begin{itemize}
	\item[(i)] if $rad_{X}(A)>rad_{X}(B)$ and $Cent_{X}(A)\backslash\Tilde{A}\neq\emptyset,$ then $Cent_{X}(A\cup B)=Cent_{X}(A)\backslash\Tilde{A}$ $\&$ $rad_{X}(A\cup B)=rad_{X}(A),$ and
	\item[(ii)]\label{3.33(ii)} if $rad_{X}(A)=rad_{X}(B)$ and $(Cent_{X}(A)\backslash\Tilde{A})\cup (Cent_{X}(B)\backslash\Tilde{B})\neq\emptyset,$ then $Cent_{X}(A\cup B)=(Cent_{X}(A)\backslash\Tilde{A})\cup (Cent_{X}(B)\backslash\Tilde{B})$ and  $rad_{X}(A\cup B)=rad_{X}(A)=rad_{X}(B).$
\end{itemize} 
\end{theorem}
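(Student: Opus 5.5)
Write $r_A=rad_X(A)$, $r_B=rad_X(B)$ and $\partial S=\partial_X(S)$. The plan is to reduce everything to the single identity $\partial(A\cup B)=\partial A\cup\partial B$, which should hold because $A$ and $B$ are separated. Once we have it, the definition of furtherness to a set gives, for every $x\in A\cup B$,
\[
\Psi(x,\partial(A\cup B))=\min\{\Psi(x,\partial A),\Psi(x,\partial B)\}.
\]

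For the boundary identity, I would use that in any space $\overline{A\cup B}=\overline A\cup\overline B$. I would then show that $(A\cup B)^\circ=A^\circ\cup B^\circ$ when $\overline A\cap B=A\cap\overline B=\emptyset$. Take $x\in(A\cup B)^\circ$ with, say, $x\in A$. Then $U_x\subseteq A\cup B$, and $x\in A$ gives $U_x\cap\overline B=\emptyset$: an open set meeting $\overline B$ meets $B$, and a point of $U_x\cap B$ would put $x\in\overline B$. Hence $U_x\subseteq A$ and $x\in A^\circ$. Subtracting the interior from the closure then gives the union formula. I expect this separatedness step to be the main point needing care; the rest is bookkeeping.

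For the inequality, take any $x\in A\cup B$, say $x\in A$. Then $\Psi(x,\partial(A\cup B))\le\Psi(x,\partial A)\le r_A$ by the definition of $Cent_X(A)$, and similarly for $x\in B$. Taking the maximum over $A\cup B$ gives $rad_X(A\cup B)\le\max\{r_A,r_B\}$. Since $A$ and $B$ are nonclopen, $\partial A$ and $\partial B$ are nonempty, so all these quantities are finite.

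For (i), assume $r_A>r_B$. For $a\in Cent_X(A)\setminus\tilde A$ we have $\Psi(a,\partial B)\ge r_A$, so the displayed minimum equals $r_A$. Every other point of $A\cup B$ has value $<r_A$. For $b\in B$ the value is at most $\Psi(b,\partial B)\le r_B<r_A$. For $a\in A\setminus Cent_X(A)$ the value is at most $\Psi(a,\partial A)<r_A$. For $a\in\tilde A$ it is at most $\Psi(a,\partial B)<r_A$. Since $Cent_X(A)\setminus\tilde A\neq\emptyset$, the maximum $r_A$ is attained exactly on that set. This gives both the center and the radius.

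Case (ii) is the same argument with $r=r_A=r_B$. Points of $(Cent_X(A)\setminus\tilde A)\cup(Cent_X(B)\setminus\tilde B)$ attain the value $r$. All other points of $A$ or $B$ fall below $r$, either because they lie outside the respective center or because they lie in $\tilde A$ or $\tilde B$. The nonemptiness hypothesis ensures the maximum $r$ is attained, so this union is exactly $Cent_X(A\cup B)$ and $rad_X(A\cup B)=r$.
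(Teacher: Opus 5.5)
Your proposal is correct and follows the paper's route: reduce to $\partial_X(A\cup B)=\partial_X(A)\cup\partial_X(B)$, bound $\Psi(x,\partial_X(A\cup B))$ by $\Psi(x,\partial_X(A))$ or $\Psi(x,\partial_X(B))$, and classify the points of $A\cup B$ by whether they lie in $Cent_X(A)\backslash\Tilde{A}$, in $Cent_X(B)\backslash\Tilde{B}$, or in neither. What you add is a proof of the boundary identity, which the paper only asserts from separatedness; in your ``subtracting'' step you also implicitly use $\overline{A}\cap B^\circ=\overline{B}\cap A^\circ=\emptyset$, which likewise follows from separatedness.
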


\begin{proof}
As $A$ and $B$ are separated, we get
$ \partial_{X}(A \cup B)= \partial_{X}(A) \cup \partial_{X}(B)$. So, for $a \in A,$ we have $\Psi(a,\partial_{X}(A\cup B))\leq \Psi(a,\partial_{X}(A))\leq rad_{X}(A).$ Similarly, for $b \in B,$ we have $ \Psi(b,\partial_{X}(A\cup B))\leq \Psi(b,\partial_{X}(B))\leq rad_{X}(B).$ This implies that $rad_{X}(A\cup B)\leq \max\{rad_{X}(A),rad_{X}(B)\}.$\par 
($i$) Let $a\in Cent_X(A\cup B)$. Then  $\Psi(a,\partial_{X}(A\cup B))\geq \Psi(b,\partial_{X}(A\cup B)),\forall b\in A\cup B.$ If $a\in A$, then $\Psi(a,\partial_{X}(A))\geq \min\{\Psi(b,\partial_{X}(A)),\Psi(b,\partial_{X}(B))\}, \forall b\in A$. So, either $a\in Cent_X(A)$ or $a\notin \Tilde{A}$. Given that  $Cent_{X}(A)\backslash\Tilde{A}\neq\emptyset$. If $a\in Cent_{X}(A)\backslash\Tilde{A}$, then $\Psi(a,\partial_{X}(A\cup B))=rad_X(A)$. Otherwise,  $\Psi(a,\partial_{X}(A\cup B))<rad_X(A)$.  Similarly, if $a\in B$, then either $a\in Cent_X(B)$ or $a\notin \Tilde{B}$, and in any case  $\Psi(a,\partial_{X}(A\cup B))<rad_X(B)<rad_X(A).$ Therefore, $Cent_X{(A\cup B)}= Cent_X(A)\backslash \Tilde{A}$ and $rad_{X}(A\cup B)=rad_{X}(A).$

$(ii)$ 
Similarly, for $a\in (Cent_{X}(A)\backslash \Tilde{A})\cup(Cent_{X}(B)\backslash \Tilde{B}),$ we get $\Psi(a,\partial_{X}(A\cup B))=rad_{X}(A).$ Otherwise, we get
$\Psi(a,\partial_{X}(A\cup B))<rad_{X}(A).$ Thus, $Cent_{X}(A\cup B)=(Cent_{X}(A)\backslash\Tilde{A}) \cup (Cent_{X}(B)\backslash\Tilde{B})$ and $rad_{X}(A\cup B)=rad_{X}(A).$
\end{proof}

In the above theorem, if $Cent_{X}(A)\backslash\Tilde{A}=\emptyset$, then it is easy to prove that the  radius of $A\cup B$ is not equals to the maximum radius of $A$ or $B$.  Interestingly, the way unions work in finite spaces is similar to how they behave in metric spaces when the subsets A and B are non-clopen and separated (Ref. \cite{Badra}).

\begin{theorem}\label{2.26}
Let A and B be  nonclopen separated subsets of a finite space $X.$
\begin{itemize}
	\item[(i)]\label{3.31(ii)} If $rad_{X}(A)>rad_{X}(B)$ and $Cent_{X}(A)\backslash\Tilde{A}=\emptyset,$ then $rad_{X}(A\cup B)<rad_{X}(A),$ and
	\item[(ii)]\label{3.31(iv)}  if $rad_{X}(A)=rad_{X}(B)$ and $(Cent_{X}(A)\backslash\Tilde{A})\cup (Cent_{X}(B)\backslash\Tilde{B})=\emptyset,$ then $rad_{X}(A\cup B)< rad_{X}(A)=rad_{X}(B).$ 
\end{itemize} 

\end{theorem}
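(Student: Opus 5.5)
The plan is to reuse the setup from the proof of Theorem \ref{2.25}. Since $A$ and $B$ are separated, $\partial_X(A\cup B)=\partial_X(A)\cup\partial_X(B)$. Hence for every $x\in A\cup B$,
\[
\Psi(x,\partial_X(A\cup B))=\min\{\Psi(x,\partial_X(A)),\Psi(x,\partial_X(B))\}.
\]
Because $X$ is finite, $rad_X(A\cup B)=\max_{x\in A\cup B}\Psi(x,\partial_X(A\cup B))$. This holds provided the union is not clopen, which we check first. The union of two separated nonclopen sets is nonclopen: for instance, $\partial_X(A)\neq\emptyset$, and then $\partial_X(A)\subseteq\partial_X(A\cup B)$ is nonempty. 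So it suffices to show that every $x\in A\cup B$ satisfies $\Psi(x,\partial_X(A\cup B))<rad_X(A)$, and then take the maximum over the finite set $A\cup B$.

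For (i), split into three cases according to where $x$ lies.
\begin{itemize}
\item If $x\in A\setminus Cent_X(A)$, then by the definition of center and radius, $\Psi(x,\partial_X(A))<rad_X(A)$. Hence the minimum is $<rad_X(A)$.
\item If $x\in Cent_X(A)$, the hypothesis $Cent_X(A)\setminus\Tilde{A}=\emptyset$ gives $x\in\Tilde{A}$, so $\Psi(x,\partial_X(B))<rad_X(A)$. Hence the minimum is again $<rad_X(A)$.
\item If $x\in B$, then $\Psi(x,\partial_X(A\cup B))\le\Psi(x,\partial_X(B))\le rad_X(B)<rad_X(A)$.
\end{itemize}
Taking the maximum over the finite set $A\cup B$ yields $rad_X(A\cup B)<rad_X(A)$.

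For (ii), the hypothesis says $Cent_X(A)=\Tilde{A}$ and $Cent_X(B)=\Tilde{B}$. Write $r=rad_X(A)=rad_X(B)$. Points of $A$ are handled exactly as in the first two cases of (i), and points of $B$ are handled symmetrically: for $x\in B\setminus Cent_X(B)$ we have $\Psi(x,\partial_X(B))<r$, and for $x\in Cent_X(B)=\Tilde{B}$ we have $\Psi(x,\partial_X(A))<rad_X(B)=r$. So every $x\in A\cup B$ has $\Psi(x,\partial_X(A\cup B))<r$, and therefore $rad_X(A\cup B)<r$.

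The only delicate point is the convention on radii. Since $A$ and $B$ are nonclopen, $rad_X(A)$ and $rad_X(B)$ are finite, and so the strict inequalities above are meaningful (not of the form $\infty<\infty$). One also needs $\partial_X(A\cup B)\neq\emptyset$, which was checked above, so that $\Psi(\cdot,\partial_X(A\cup B))$ is finite and the maximum is attained. Beyond this bookkeeping, the argument is a direct case analysis, and I expect no real obstacle.
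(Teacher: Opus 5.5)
Your proof is correct and takes essentially the paper's approach. The paper gives no separate proof of Theorem \ref{2.26}; it treats it as the easy companion of Theorem \ref{2.25}, whose proof uses the same case analysis you carry out: $\partial_X(A\cup B)=\partial_X(A)\cup\partial_X(B)$, points of $A$ that lie outside $Cent_X(A)$ or inside $\Tilde{A}$ fall strictly below $rad_X(A)$, and points of $B$ are bounded by $rad_X(B)$. Your explicit checks that the union is nonclopen and that the radii are finite are bookkeeping the paper leaves implicit.
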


By induction, we can generalise Theorem \ref{2.25}, for a finite union of subsets of a finite space.

\begin{theorem}\label{3.37}
	Let $X$ be a finite space. For nonclopen subsets $A_i \subseteq X, 1\leq i\leq n$, such that $A_i$ $\&$ $A_j$ are separated,
	for all $i\neq j$ and $n\in \mathbb{N},$
	let $\Tilde{A}_{j}=\{a\in Cent_{X}(A_{j})|
	\Psi(a,\partial_{X}(A_{i}))<rad_{X}(A_{j}),\text{ for some } i\neq j\}, 1\leq j\leq n$. Let $M$ be the collection of all those $A_j$ such that $rad_{X}(A_{j})= \max\{rad_{X}(A_{i})|1\leq i \leq n\}$ and $Cent_{X}(A_j)\backslash\Tilde{A_j}\neq\emptyset$. Then, $rad_{X}(\bigcup\limits_{1}^{n}A_{i}) \leq  \max\{rad_{X}(A_{i})|1\leq i\leq n\}$.\\
	Moreover, if $\bigcup\limits_{A_j\in M}(Cent_{X}(A_j)\backslash\Tilde{A_j})\neq\emptyset,$ 
	then  $Cent_{X}(\bigcup\limits_{1}^{n}A_{i}) = \bigcup\limits_{A_{j}\in M}(Cent_{X}(A_{j})\backslash \Tilde{A_{j}})$ $\&$ $rad_{X}(\bigcup\limits_{1}^{n}A_{i}) = \max\{rad_{X}(A_{i})|1\leq i \leq n\}.$
\end{theorem}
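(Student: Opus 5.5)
The plan is to argue directly, as in the proof of Theorem \ref{2.25}, rather than by a formal induction. The basic tool is the boundary identity for pairwise separated sets,
\[
\partial_{X}\Big(\bigcup_{i=1}^{n}A_{i}\Big)=\bigcup_{i=1}^{n}\partial_{X}(A_{i}).
\]
It holds because closure commutes with finite unions. Moreover, pairwise separatedness means that a point of $A_j$ lying in the interior of $A_j$ is also interior to the union, while a point of the union outside every $A_j^\circ$ is a boundary point of the corresponding $A_i$. Write $r=\max\{rad_X(A_i)\}$ and $D=\bigcup_i\partial_X(A_i)$. Since $\Psi(a,\cdot)$ on a union of sets is the minimum over the pieces (definition of $\Psi(a,B)$), we get, for every $a\in A_j$,
\[
\Psi(a,D)=\min_{1\leq i\leq n}\Psi(a,\partial_X(A_i))\leq \Psi(a,\partial_X(A_j))\leq rad_X(A_j)\leq r .
\]
Taking the maximum over $a\in\bigcup A_i$ gives the first claim $rad_X(\bigcup A_i)\leq r$. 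Because each $A_i$ is nonclopen, each boundary is nonempty, so all these quantities are finite.

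For the moreover part, I split the points $a\in A_j$ into three cases.

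First, suppose $A_j\in M$ and $a\in Cent_X(A_j)\backslash\Tilde{A_j}$. Then $\Psi(a,\partial_X(A_j))=rad_X(A_j)=r$. Also, $a\notin\Tilde{A_j}$ gives $\Psi(a,\partial_X(A_i))\geq rad_X(A_j)=r$ for every $i\neq j$. Hence the minimum equals $r$.

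Second, suppose $a\notin Cent_X(A_j)$. Then $\Psi(a,\partial_X(A_j))<rad_X(A_j)\leq r$, so $\Psi(a,D)<r$.

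Third, suppose $a\in Cent_X(A_j)$ but either $a\in\Tilde{A_j}$ or $A_j\notin M$. If $rad_X(A_j)<r$, then $\Psi(a,D)<r$ directly. If $a\in\Tilde{A_j}$, some $i\neq j$ has $\Psi(a,\partial_X(A_i))<rad_X(A_j)\leq r$. The remaining subcase is $rad_X(A_j)=r$ and $a\in Cent_X(A_j)\backslash\Tilde{A_j}$, which forces $A_j\in M$ by definition and so belongs to the first case.

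Thus under the hypothesis $\bigcup_{A_j\in M}(Cent_X(A_j)\backslash\Tilde{A_j})\neq\emptyset$, the maximum of $\Psi(\cdot,D)$ over the union is exactly $r$. It is attained precisely on $\bigcup_{A_j\in M}(Cent_X(A_j)\backslash\Tilde{A_j})$, which gives both the center and the radius formulas.

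The main obstacle is justifying the boundary identity for $n$ pairwise separated nonclopen sets in a finite (non-Hausdorff) space. The separatedness must be used to show $\overline{A_i}\cap A_j=\emptyset$, which makes the interior of the union equal to $\bigcup A_i^\circ$. This is the one non-formal point, and I would verify it first. In doing so I would use the closure characterization $\overline{A}=\{y\mid\Psi(y,A)=0\}$ noted earlier in the paper, together with the inclusion $\partial(\bigcup A_i)\subseteq\bigcup\partial A_i$. For the reverse inclusion I would use that a boundary point of $A_i$ outside $A_i$ cannot lie in the open sets $A_k^\circ$ for $k\neq i$ without meeting $\overline{A_i}$, since open sets meeting $\overline{A_i}$ meet $A_i$.
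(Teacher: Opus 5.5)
Your proof is correct, but it does not follow the paper's route. The paper gives no separate argument and only remarks that Theorem \ref{2.25} generalises by induction on $n$. That inductive route has the merit of reusing the two-set theorem verbatim, but spelled out it needs some care. One must check that $B=A_1\cup\cdots\cup A_{n-1}$ is nonclopen and separated from $A_n$. One must then translate the tilde set of $B$ relative to $\partial_X(A_n)$, and the case split of Theorem \ref{2.25} (with the roles of $B$ and $A_n$ possibly swapped), into the $n$-fold sets $\Tilde{A_j}$ and the family $M$. When the $(n-1)$-fold hypothesis fails, one must also know that $rad_X(B)<\max_{i<n}rad_X(A_i)$, so the induction hypothesis has to carry an $(n-1)$-set version of Theorem \ref{2.26}, and that no $A_j$ with $j<n$ then contributes to the $n$-fold union.

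Your pointwise identity $\Psi(a,D)=\min_i\Psi(a,\partial_X(A_i))$, combined with the three-way case split, avoids this bookkeeping. It contains Theorem \ref{2.25} as the case $n=2$. It also gives the $n$-set analogue of Theorem \ref{2.26} for free: if $\bigcup_{A_j\in M}(Cent_X(A_j)\backslash\Tilde{A_j})=\emptyset$, your cases show $\Psi(a,D)<r$ at every point of the union.

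You also supply the boundary identity, which the paper uses without proof even for two sets. When you write it up, note that $\overline{A_i}\cap A_j=\emptyset$ is the definition of separatedness rather than something to prove. The direction that genuinely needs it is $(\bigcup_i A_i)^\circ\subseteq\bigcup_i A_i^\circ$: for $x\in A_j$ and open $U$ with $x\in U\subseteq\bigcup_i A_i$, the set $U\backslash\bigcup_{i\neq j}\overline{A_i}$ is an open neighbourhood of $x$ inside $A_j$.
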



\subsection{\textbf{The Quasi-Center and the Quasi-Radius}}

Just like in metric spaces, for every subset of a finite space, we define quasi-center and quasi-radius.

\begin{definition}[Quasi-center of a subset]
	The quasi-center of $A$ is the set $\{a\in A|\Psi(a,A^c)\geq \Psi(b,A^c),\forall b\in A\},$ where $A^c$ denotes the complement of $A$ in $X$. We denote the quasi-center of $A$ in $X$ by $QCent_{X}(A).$\par
	Thus the quasi-center of $A$ is the set of all those elements of $A$ which are at the maximum furtherness from $A^c.$
\end{definition}

\begin{definition}[Quasi-radius of a subset]
	The quasi-radius of a subset $A$ of metric space $X$ is the furtherness of its quasi-center from its complement in $X.$ We denote the quasi-radius of $A$ in $X$ by $Qrad_{X}(A)$.\par
	Notice that $Qrad_{X}(A)=\Psi(QCent_{X}(A),A^c)=\Psi(a,A^c),\forall a\in QCent_{X}(A).$
\end{definition}

\begin{example}{\normalfont
		Let $X=\{a,b,c,d\}$ be a topological space with topology $\mathcal{T}=\{\emptyset,X,\{a\},\{c\},\{a,b\},\{c,d\},\{a,b,c\},\{a,c,d\}\}$. For $A=\{a,b\}\subseteq X$ is clopen. So, we have $\partial_{X}(A)=\emptyset$ and $Cent_X(A)=A$, $rad_{X}(A)=\infty$. But $A^c=\{c,d\}$ and $\Psi(a,A^c)=2, \Psi(b,A^c)=1$. So, $QCent_{X}(A)=\{a\}$ and $Qrad_{X}(A)=2$.
	}
\end{example}





We know that the largest open balls contained in a subset of a metric space are those balls which are centered at the quasi-center of that subset with  radius equals to the quasi-radius of that subset, see \cite{Badra}. Similarly, we can identify the largest forward open balls contained in any subset of a finite space.

\begin{theorem}
Let $A$ be a nonempty proper subset of a finite space $X$. Then the largest forward open balls of $X$ which are entirely contained in $A$ are the balls whose centers belong to $QCent_{X}(A)$ and radius is equal to $Qrad_{X}(A)$.
\end{theorem}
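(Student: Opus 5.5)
The plan is to unwind the definitions. A forward ball $B^+(x,n)=\{y\in X\mid \Psi(x,y)<n\}$ lies inside $A$ exactly when every $y\in A^c$ satisfies $\Psi(x,y)\geq n$. For $x\in A$ this is the same as $\Psi(x,A^c)\geq n$, by the definition $\Psi(x,A^c)=\min_{y\in A^c}\Psi(x,y)$. Since $A$ is a proper subset, $A^c\neq\emptyset$, so $\Psi(x,A^c)$ is a finite whole number.

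The first step is to show that any forward ball contained in $A$ has its center in $A$. Indeed, $\Psi(x,x)=0<n$ for every $n\in\mathbb{N}$, so $x\in B^+(x,n)\subseteq A$. Hence the balls to consider are exactly the $B^+(x,n)$ with $x\in A$ and $1\leq n\leq \Psi(x,A^c)$. Note that $\Psi(x,A^c)\geq 1$ is needed for any such ball to exist. I would also record that $B^+(x,n)$ is increasing in $n$.

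Second, I will compare these balls. For fixed $x\in A$, the largest admissible ball is $B^+(x,\Psi(x,A^c))$, and it contains every admissible ball centered at $x$. Over all centers, the admissible radii are bounded by $\max_{a\in A}\Psi(a,A^c)=Qrad_X(A)$. This maximum is attained exactly at the points of $QCent_X(A)$. So the balls of the maximal admissible radius are precisely $B^+(a,Qrad_X(A))$ with $a\in QCent_X(A)$, and each of them is contained in $A$ because $\Psi(a,A^c)=Qrad_X(A)$.

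The main obstacle is fixing what ``largest'' means. Balls with different centers need not be comparable under inclusion. The natural reading, parallel to \cite{Badra}, is largest radius among balls contained in $A$, and I would state that reading explicitly. A degenerate case must also be handled: if $Qrad_X(A)=0$, which happens when $A^\circ=\emptyset$ by the remarks on Theorem~\ref{2}, then no ball of radius $n\in\mathbb{N}$ fits inside $A$. I would either note that the statement is then vacuous, or allow radius $0$ and set $B^+(a,0)=\emptyset$. Finally, I would add that $B^+(a,n)\supseteq B^+(a,1)=U_a$, so these maximal balls are open sets containing $U_a$, which ties them back to the topology.
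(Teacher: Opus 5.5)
The paper states this theorem without proof; it only points to the metric-space analogue in \cite{Badra}. So there is no argument of the authors' to compare against. Your direct unwinding is correct and supplies exactly the verification the statement needs: $B^+(x,n)\subseteq A$ holds if and only if $x\in A$ and $n\le\Psi(x,A^c)$, and you then maximise over $x$.

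Your two caveats are not pedantry; the statement is false or ill-posed without them.

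First, ``largest'' must mean largest radius, not maximal under inclusion. Take Example~\ref{2.2} with $A=\{a,b\}$. There $\Psi(a,A^c)=\min\{3,1\}=1$ and $\Psi(b,A^c)=\min\{2,1\}=1$, so $QCent_X(A)=\{a,b\}$ and $Qrad_X(A)=1$. Yet $B^+(a,1)=\{a\}\subsetneq\{a,b\}=B^+(b,1)$. So a ball centred in the quasi-center with the quasi-radius need not be largest by inclusion. You should state the radius reading explicitly in the theorem, as you propose.

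Second, the paper takes $n\in\mathbb{N}$ with $B^+(x,1)=U_x$, so radius $0$ is not available. By Theorem~\ref{2}, $\Psi(a,A^c)=0$ if and only if $U_a\not\subseteq A$. Hence $Qrad_X(A)=0$ happens \emph{exactly} when $A^\circ=\emptyset$; you wrote only ``when'', and it is worth sharpening to an equivalence. In that case no forward ball lies in $A$ at all. The statement is then vacuous unless one adopts your convention $B^+(a,0)=\emptyset$, or adds the hypothesis $A^\circ\neq\emptyset$.

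With those two points made explicit, your argument is a complete proof.
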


\end{document}